\newcounter{dummy}
\def\thmref@flush{%
   \ifx\thmref@last\empty\else
      \ifthmref@comma, \thmref@finaltrue\fi \thmref@commatrue
      \thmref@last \ifx\thmref@stack\empty\else s\fi \thmref@num 0
      \let\do\thmref@one \thmref@stack
      \ifcase\thmref@num\or\space and\else\thmref@finaltrue, and\fi
      ~\ref{\thmref@head}\let\thmref@stack\empty\fi}
\def\thmref@one#1{\ifnum\thmref@num>0,\fi
   \space\ref{#1}\advance\thmref@num 1\relax}
\newcommand\myitem[1][]{\item[#1]\refstepcounter{dummy}\def\@currentlabel{#1}}
\newcommand{\E}{\mathbf{E}}
\renewcommand{\P}{\mathbf{P}}
\newcommand{\1}{\mathbf{1}}
\DeclareMathOperator{\Poi}{Poi}
\DeclareMathOperator{\Geo}{Geo}
\DeclarePairedDelimiter\abs{\lvert}{\rvert}%
\DeclarePairedDelimiter\tvnorm{\lVert}{\rVert_{\mathrm{TV}}}
\DeclarePairedDelimiter\ceil{\lceil}{\rceil}%
\DeclarePairedDelimiter\ii{\llbracket}{\rrbracket}%
\newcommand{\erase}{\mathsf{GreedyPath}}
\newcommand{\ZZ}{\mathbb{Z}}
\newcommand{\NN}{\mathbb{N}}
\newcommand{\Instr}{\mathsf{Instr}}
\newcommand{\eosos}[1]{\mathcal{O}_{#1}}
\newcommand{\ip}[1]{\mathcal{I}_{#1}}
\newcommand{\s}{\mathfrak{s}}
\newcommand{\critical}{\rho}
\newcommand{\lpcritical}{\rho_*}
\newcommand{\smax}{s^{\max}}
\newcommand{\rt}[2]{\mathcal{R}_{#2}(#1)}
\newcommand{\lt}[2]{\mathcal{L}_{#2}(#1)}
\newcommand{\emax}{e_{\max}}
\DeclareSymbolFont{stixletters}{LS1}{stix}{m}{it}
\DeclareMathAccent{\cev}{\mathord}{stixletters}{"91}
\DeclareMathAccent{\vec}{\mathord}{stixletters}{"92}
\DeclareMathAccent{\vecev}{\mathord}{stixletters}{"95}
\newcommand{\greedy}[1][k]{\text{$k$-\erase}}
\newcommand{\crist}[1][\@nil]{%
\def\tmp{#1}%
   \ifx\tmp\@nnil
       \lpcritical
    \else
       \lpcritical^{(#1)}
    \fi}
\newcommand{\Left}{\ensuremath{\mathtt{left}}}
\newcommand{\Right}{\ensuremath{\mathtt{right}}}
\newcommand{\Sleep}{\ensuremath{\mathtt{sleep}}}
\newtheorem{thm}{Theorem}[section]
\newtheorem{lemma}[thm]{Lemma}
\newtheorem{prop}[thm]{Proposition}
\newtheorem{cor}[thm]{Corollary}
\newtheorem*{conjecture*}{Density Conjecture}
\theoremstyle{remark}
\newtheorem{remark}[thm]{Remark}
\theoremstyle{definition}
\newcommand{\HOX}[1]{\marginpar{\scriptsize #1}}
\definecolor{hancolor}{rgb}{0.0 0.0, 1.0}
\newcommand{\critFE}{\critical_{\mathtt{FE}}}
\newcommand{\critDD}{\critical_{\mathtt{DD}}}
\newcommand{\mo}{\mathfrak{m}}
\newcommand{\restrictedOp}[2]{#1\if\relax\detokenize{#2}\relax\else^{#2}\fi}
\newcommand{\restrictedSfOp}[2]{\restrictedOp{\mathsf{#1}}{#2}}
\newcommand{\A}[1][]{\restrictedSfOp{A}{#1}}
\newcommand{\Stab}[1][]{\restrictedSfOp{S}{#1}}
\newcommand{\Jump}[1][]{\restrictedSfOp{J}{#1}}
\newcommand{\Topple}[1][]{\restrictedSfOp{T}{#1}}
\newcommand{\odom}{f}
\newcommand{\odomm}{g}
\newcommand{\trueodom}{\odom_{\Stab}}
\newcommand{\config}{\sigma}
\newcommand{\state}{(\config, \odom)}
\newcommand{\configg}{\tau}
\newcommand{\configstart}{\config_0}
\newcommand{\V}{V}
\newcommand{\onepersite}{1_\V}
\newcommand{\zeroconfig}{0}
\newcommand{\sweepers}{\config_{\mathsf{sweep}}}
\newcommand{\drive}{\config^{\mathsf{drive}}}
\newcommand{\sweepersstart}{\sweepers}
\newcommand{\cars}{\config_{\mathsf{car}}}
\newcommand{\carsstart}{\cars}
\newcommand{\stationarysymb}{\pi}
\newcommand{\stablelaw}[1]{\mu({#1})}
\newcommand{\Tfull}{T_{\mathtt{full}}}
\newcommand{\tmix}[1][\@nil]{%
\def\tmp{#1}%
   \ifx\tmp\@nnil
       t_{\mathtt{mix}}
    \else
       t_{\mathtt{mix}}^{(#1)}
    \fi}
\newcommand{\tsep}[1][\@nil]{%
\def\tmp{#1}%
   \ifx\tmp\@nnil
       t_{\mathtt{sep}}
    \else
       t_{\mathtt{sep}}^{(#1)}
    \fi}    
\newcommand{\supp}{\operatorname{supp}}
\newcommand{\m}{\mathfrak{m}}
\begin{document}

 \title{Cutoff for activated random walk}  

 \author{Christopher Hoffman}
 \address{Christopher Hoffman, Department of Mathematics, University of Washington}
 \email{\texttt{choffman@uw.edu}}
 \author{Tobias Johnson}
 \address{Tobias Johnson, Departments of Mathematics,  College of Staten Island, City University of New York}
\email{\texttt{tobias.johnson@csi.cuny.edu}}
 \author{Matthew Junge}
 	\address{Matthew Junge, Department of Mathematics, Baruch College, City University of New York}
	\email{\texttt{matthew.junge@baruch.cuny.edu}}
 \author{Josh Meisel}
 	\address{Josh Meisel, Department of Mathematics, Graduate Center, City University of New York}
	\email{\texttt{jmeisel@gradcenter.cuny.edu}}

\begin{abstract}
We prove that the mixing time of driven-dissipative activated random walk on an interval of length $n$ with uniform or central driving exhibits cutoff at $n$ times 
the critical density for activated random walk on the integers. 
The proof uses a new result for arbitrary graphs showing that the chain is mixed once activity is likely at every site.
\end{abstract}
  \maketitle

\section{Introduction}

Avalanches, earthquakes, solar flares, and many other natural systems display power-law magnitudes and fractal structures akin to laboratory-tuned phase transitions \cite{watkins2016twentyfive}. Bak, Tang, and Wiesenfeld theorized that the gradual accumulation and sudden dissipation of energy among many spatial degrees of freedom cause such systems to self-organize into a critical-like state \cite{BakTangWiesenfeld87, BakTangWiesenfeld88}. They proposed the \emph{abelian sandpile} as a mathematical model. While the model exhibits intricate
structure \cite{levine2016apollonian, levine2017apollonian}, it is now believed that
slow mixing makes it sensitive to minor modifications, giving rise to nonuniversal behavior disqualifying it as a model of self-organized criticality \cite{fey2010driving, levine2015threshold}.

\emph{Activated random walk} (ARW) is a related stochastic model of self-organized criticality
that is believed to exhibit the desired universality. It can be formulated as an interacting particle system on a graph with active and sleeping  particles. Active particles perform simple random walk at exponential rate 1. When an active particle is alone, it falls asleep at exponential rate $\lambda \in (0,\infty)$. Sleeping particles remain in place but become active if an active particle moves to their site. If all active particles fall asleep, the system stabilizes with every site in the final configuration either empty or containing exactly one sleeping particle.

We consider the \emph{driven-dissipative} version, 
in which particles are injected into a finite graph and killed at the boundary.
Formally, the process takes place on a finite graph
$G= (V\cup\{z\},E)$ where the vertex $z$ represents a sink.
Typically, our graph will be the interval $\ii{1,n}:=[1,n]\cap\ZZ$
with the sink $z$ equal to the outer boundary vertices $\{0,n+1\}$.
Fix some \emph{driving sequence} $\mathbf u = (u_t)_{t \geq 1}$ of vertices in $V$, 
which can be deterministic or random but must be independent of the underlying particle dynamics (we make this restriction precise in Section~\ref{sec:odometers}).
Most interest has focused on \emph{uniform driving}, where each $u_t$ is selected
independently and uniformly from $V$, and \emph{central driving}, where $u_t$ is deterministically
equal to the vertex at the center of $V$ (on $\ii{1,n}$, we take $u_t=\ceil{n/2}$).
Specify an initial configuration of particles $\sigma_0\in\{\s,0,1,\ldots\}^V$,
where $\s$ represents a sleeping particle and $k\in\NN:=\{0,1,\ldots\}$ represents a quantity
of active particles.
At each step $t\geq 1$, an active particle
is added at $u_t$. Then, we \emph{stabilize} the system, meaning that we let it
evolve according to ARW dynamics until it arrives
at a configuration $\sigma_t$ containing only trapped and sleeping particles.
The sequence $\sigma_t\in \{0,\s\}^V$ for $t\geq 1$ is a Markov chain
on the state space of stable configurations on $V$.

Driven-dissipative activated random walk is believed to converge in time to a critical state
with power-law spatial correlations \cite[Section~1.3]{rolla2020activated}.
As we will explain in the following section, it is of great importance to establish how quickly this convergence occurs.
Levine and Liang established that with uniform or central driving,
driven-dissipative ARW on the Euclidean ball of volume $n$ 
is mixed after the addition of $(1+o(1))n$
particles \cite[Corollary~5]{levine2021exact}, 
in contrast with the abelian sandpile whose mixing time (in dimension two,
at least) contains an extra logarithmic factor \cite{hough2019sandpiles,hough2021cutoff}.
In support of the universality of ARW, they conjectured that with uniform driving 
it mixes as quickly as possible, i.e., 
as soon as the system density reaches a hypothesized 
density $\critDD=\critDD(\lambda,d)$ 
equal to the limiting density of the stationary state of the Markov chain
as the ball size is increased \cite[Conjecture~10]{levine2021exact}.

Levine and Liang further proposed that
$\critDD$ coincides with a critical density $\critFE$
for a phase transition for \emph{fixed-energy ARW} on $\ZZ^d$,
reiterating the \emph{density conjecture} first proposed by Dickman, Mu\~noz, Vespignani, and Zapperi
\cite{dickman1998self}.
Fixed-energy ARW is initialized with a configuration of active particles
of given density $\rho$, which then remains fixed as the system evolves according to the ARW
dynamics with no addition or destruction of particles (see \cite{rolla2020activated}).
This system has a traditional absorbing-state phase transition
in the parameter $\rho$ at critical density
$\rho_{\texttt{FE}}= \rho_{\texttt{FE}}(\lambda, d)$:
for any ergodic initial configuration of density $\rho$,
at each site activity eventually ceases almost surely if $\rho < \critFE$,
while activity persists for all time almost surely if $\rho>\critFE$
\cite{rolla2019universality}. 
Three of the authors of the present paper recently proved the density conjecture
in dimension one, establishing the existence of $\critDD$ and its equality with $\critFE$
\cite{hoffman2024density}.

Now, we establish Levine and Liang's mixing time conjecture in dimension one.
To state the result, let $\pi=\pi(G,\lambda)$ denote the stationary distribution of 
the driven-dissipative chain for a given graph $G$ and
sleep rate $\lambda\in(0,\infty)$. Note that $\pi$ does not depend on the
driving sequence $\mathbf{u}$ \cite[Lemma~6]{levine2021exact}.
Denote the total-variation norm by $\tvnorm{\cdot}$. 
\begin{thm} \thlabel{thm:mix}
  Let $(\sigma_t)_{t\geq 1}$ denote the driven-dissipative ARW chain on $\ii{1,n}$
  with uniform or central driving. For any sleep rate $\lambda\in(0,\infty)$ and $\epsilon>0$,
  there exist constants $c,C>0$ such that for all $n \geq 1$
  \begin{alignat}{2}
    \max_{\sigma_0}\tvnorm{\P(\sigma_t\in\cdot)-\pi}&\leq Ce^{-cn},&\quad\text{for $t>(\critFE+\epsilon)n$},
    \label{eq:mixing.upper}\\
    \intertext{while with $\sigma_0$ equal to the empty configuration,}
    \tvnorm{\P(\sigma_t\in\cdot)-\pi}&\geq 1-Ce^{-cn},&\quad\text{for $t<(\critFE-\epsilon)n$.}
    \label{eq:mixing.lower}
  \end{alignat}
\end{thm}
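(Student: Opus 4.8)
The two estimates are essentially independent, and \eqref{eq:mixing.upper} is where the new work lies. The first ingredient is a general fact about the driven-dissipative chain on an arbitrary finite sink graph $G=(V\cup\{z\},E)$: there is an event $\mathcal E_t$, phrased purely in terms of the activity generated by time $t$ (for instance, that the cumulative odometer has become positive everywhere, or that activity has percolated across $G$), with the property that
\[
  \max_{\sigma_0}\tvnorm{\P_{\sigma_0}(\sigma_t\in\cdot)-\pi}\;\le\;\P(\mathcal E_t^{\,c}).
\]
I would prove this by running two copies of the chain from arbitrary starting configurations under a common driving sequence and a common stack of instructions (the abelian Diaconis--Fulton representation): the odometers are then monotone in the starting configuration, and an abelian-property argument shows that once activity has swept $V$ and the surplus has been flushed through the sink, the configuration is a fixed function of the instruction stacks, so the two copies agree from that step onward. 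This reduces the upper bound to showing $\P(\mathcal E_t^{\,c})\le Ce^{-cn}$ for $G=\ii{1,n}$ with uniform or central driving and $t>(\critFE+\epsilon)n$.

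\textbf{Verifying $\mathcal E_t$ on the interval.} By the monotonicity just mentioned the worst starting configuration is the empty one, so take $\sigma_0=0$. The structural point is that until a particle is first lost through the sink, no particle has seen the boundary and the dynamics are exactly those of fixed-energy ARW; in particular, by the abelian property, after $m$ injections with no loss the configuration is the stabilization of $m$ particles placed at the driving sites. When $m=\floor{(\critFE+\epsilon/2)n}$ this configuration would carry density above $\critFE$, hence be supercritical; invoking the quantitative form of the persistence-of-activity half of \cite{rolla2019universality} --- that a supercritical fixed-energy mass supported in $\ii{1,n}$ spreads activity to the boundary except with probability $e^{-cn}$ (equivalently, its stabilization cannot fit inside $\ii{1,n}$) --- shows that a loss has in fact occurred by then with exponentially high probability. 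Continuing to inject particles up to time $(\critFE+\epsilon)n$ and tracking how the activated region grows under this continued driving then carries the activity across the remainder of the interval; the equality $\critDD=\critFE$ proved in \cite{hoffman2024density} is precisely what makes $(\critFE+\epsilon)n$, rather than a larger multiple of $n$, the correct threshold. The two driving schemes enter only in working out the geometry of the activated region: for central driving it is an interval symmetric about the center carrying density $\critFE$, whose endpoints advance deterministically with exponentially small fluctuations and have reached $1$ and $n$ by time $(\critFE+\epsilon)n$; the uniform case is handled similarly, the spread-out injection only helping. Sites immediately adjacent to the boundary are treated by a separate and easier argument, their stationary marginals being nearly degenerate and their local dynamics fast under the flux arriving from the bulk.

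\textbf{Main obstacle.} The hard part is exactly this: converting ``the accumulated mass is supercritical'' into ``$\mathcal E_t$ has occurred'' with an error that is genuinely exponential in $n$. The subtlety is that once dissipation has begun, the boundary pins the bulk density to within $o(1)$ of $\critFE$, so the actual driven-dissipative configuration is never comfortably supercritical and cannot be fed directly into an off-critical, finite-correlation-length avalanche bound; the comparison with supercritical fixed-energy ARW has to be staged during the transient phase, before the density relaxes, or run through monotonicity against an auxiliary supercritical fixed-energy system. Extracting an exponential --- rather than stretched-exponential or merely polynomial --- rate here, especially for uniform driving where there is no single advancing front, is where the one-dimensional density-conjecture machinery of \cite{hoffman2024density}, together with quantitative fixed-energy spreading estimates, carries the argument.

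\textbf{Lower bound.} For \eqref{eq:mixing.lower} I would separate the two laws by the total mass $|\sigma|:=\#\{x\in V:\sigma(x)=\s\}$. From the empty configuration at most one particle enters per step and particles exit only through the sink, so $|\sigma_t|\le t$ deterministically, whence $\P(\sigma_t\in A)=1$ for $A:=\{\sigma:|\sigma|\le t\}$ whenever $t<(\critFE-\epsilon)n$. On the other hand $\lim_n\E_\pi|\sigma|/n=\critDD=\critFE$ by \cite{hoffman2024density}, and a mass deficit of order $n$ under $\pi$ is a macroscopic anomaly, so one expects an exponential lower-tail bound $\pi(A)\le\P_\pi\bigl(|\sigma|\le(\critFE-\epsilon)n\bigr)\le Ce^{-cn}$ --- available either directly from the arguments of \cite{hoffman2024density} or from a short subadditivity estimate. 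Combining, $\tvnorm{\P(\sigma_t\in\cdot)-\pi}\ge\P(\sigma_t\in A)-\pi(A)\ge1-Ce^{-cn}$. The only nonroutine input is the exponential concentration of $|\sigma|$ under $\pi$, which is soft relative to what the upper bound already demands.
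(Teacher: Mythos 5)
Your lower bound is essentially the paper's: separate $\pi$ from the law of $\sigma_t$ by total particle count, using the deterministic bound $|\sigma_t|\le t$ from the empty start and an exponential lower-tail bound on $\pi(|\sigma|\le(\critFE-\epsilon)n)$, which the paper imports as \cite[Proposition~8.5]{hoffman2024density}. That part is fine.

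The upper bound has a genuine gap in the first step. Your proposed criterion (mixed once activity covers $V$) is the right target, but the coupling you sketch does not work. You claim that running two initial conditions on a common instruction stack, ``once activity has swept $V$ \ldots the configuration is a fixed function of the instruction stacks, so the two copies agree from that step onward.'' This is false: odometers are monotone in the initial configuration, but not equal, so the two copies read \emph{different} positions in each stack and their stabilized configurations generically differ even when both visit every site. (The Levine--Liang exact-sampling theorem says the \emph{law} of the stabilization of any all-occupied configuration is $\pi$; for a fixed stack, the realization does depend on the configuration.) The paper's Theorem~\ref{thm:one} achieves what you want but by a different mechanism: it introduces a \emph{preemptive abelian property} (\thref{lem:pap}) --- a sleeping particle that will be visited may be woken early without changing the stabilization --- and a \emph{street-sweeper lemma} (\thref{lem:street-sweeper}) in which a car at every site is iteratively jumped off the graph while the driving particles remain in place, the final equality with $\pi$ holding \emph{in law} via the strong Markov property for quenched instructions, not as a pointwise coupling. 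This is why the resulting bound has the nontrivial form $m\,p + \P(H\ge m)$ rather than a single $\P(\mathcal{E}_t^c)$: each jump round costs a fresh visit-failure probability $p$, recycled via the Markov property.

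On the verification step, your ``main obstacle'' (the bulk density being pinned near $\critFE$ once dissipation begins) is an artifact of thinking about the chain dynamically. The paper works statically: $\sigma_t$ is, by the abelian property, the one-shot stabilization of $\sigma_0+\drive_t$, a fixed configuration of density $>\critFE$, and one only needs that this \emph{fixed supercritical} configuration visits all of $\ii{1,n}$. That reduces to \thref{prop:exit.criterion}, a quantitative, exponentially small error, site-specific exit bound that the paper proves via the layer-percolation machinery of \cite{hoffman2024density} (embedding stable odometers as infection paths, Galton--Watson concentration for the rightmost column, etc.). This bound is not available from \cite{rolla2019universality}, which gives universality of $\critFE$ but not an exponential-rate spreading estimate of the form you need; this is the second real input you are missing.
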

The maximum in \eqref{eq:mixing.upper} is taken over all configurations $\sigma_0\in\{\s,0,1,\ldots\}^V$.
This theorem implies that the mixing time
\begin{align*}
  \tmix[n](\epsilon) := \min\bigl\{t\colon \max_{\sigma_0}\tvnorm{\P(\sigma_t\in\cdot)-\pi}\leq\epsilon\bigr\}
\end{align*}
    satisfies $\tmix[n](\epsilon)=(\critFE+o(1))n$  and that the chain exhibits \emph{cutoff}, 
meaning that $\tmix[n](\epsilon)/\tmix[n](1-\epsilon)\to 1$ for any $\epsilon>0$.


Our result provides evidence for the universality of ARW in several ways.
First, it shows that the initial configuration is forgotten by the system
as quickly as possible, demonstrating that it is a ``small-scale detail'' with ``negligible effect on
large-scale observables,'' to quote Levine and Silvestri's description of universality 
\cite{levine2023universality}.
Next, numerical and theoretical results on the abelian
sandpile model show that slow mixing is at the heart of its nonuniversality, as we discuss
further in Section~\ref{sec:sandpile.history}.
ARW does not exhibit this flaw and can serve as a universal model
for self-organized criticality.

Our techniques for bounding the mixing time apply on any graph. 
Given an extension of \cite{hoffman2024density}
to higher dimensions---a substantial
task, admittedly---we could establish Levine and Liang's conjecture in all dimensions.
We explain further in Section~\ref{sec:methods}.

In the rest of this section, we give a fuller account of self-organized criticality
and ARW, with an emphasis on the importance of fast mixing
for a model of self-organized criticality. We also describe our techniques and
how they differ from previous ones.

\subsection{Self-organized criticality and sandpiles}\label{sec:sandpile.history}

Bak, Tang, and Wiesenfeld's original and simplest illustration of self-organized criticality is a growing sandpile on a table \cite{BakTangWiesenfeld87}. As sand is sprinkled from above, the pile's slope steadily increases until it reaches a critical slope that is maintained as sand spills off the table in avalanches. They posited that this exemplifies how many complex systems, such as snow slopes, tectonic plates, and the surface of the sun, manage to stay poised in critical-like states despite no apparent external tuning.

Seeking a tractable mathematical model that drives itself
to criticality without external tuning, they proposed the abelian sandpile on a two-dimensional square lattice  \cite{BakTangWiesenfeld87}. 
Particles
remain still until four build up on a single site. Then the site \emph{topples},
sending the four particles
deterministically to the site's four neighbors and possibly triggering a further cascade.
As with ARW, the driven-dissipative version of the model is defined on a finite graph, and in each
step a particle is injected into the system with killing at the boundary.
In the picture of self-organized criticality painted by Bak, Tang, and Wiesenfeld, 
the density of the system increases steadily
as particles are added until it reaches a critical value. 
Then, the ``barely stable'' system maintains itself at criticality
by dissipations of energy with a power-law tail, as in a traditional thermodynamic
system at criticality.

As with ARW and other sandpile models, the abelian sandpile has a fixed-energy version in which
the graph is initialized with particles at a given density which then evolve according to
the sandpile dynamics with no further addition or deletion of particles.
The critical density for the fixed-energy model of the abelian sandpile on an infinite
lattice, often called the \emph{threshold density}, is defined as the supremum
over all $\rho$ such that the configuration consisting of i.i.d.-$\Poi(\rho)$ particles
per site is stabilizable (see \cite{fey2009stabilizability} for the definition
of stabilizable). The threshold density is defined on finite graphs without boundary
as the expected density when the configuration becomes nonstabilizable when adding
particles one at a time at random to the system (see for example \cite[Sections~1.3 and 1.8]{levine2015threshold}).

Dickman, Mu\~noz, Vespignani, and Zapperi theorized
that self-organized criticality is a consequence of driven-dissipative systems naturally
converging to the corresponding fixed-energy system at criticality 
\cite{dickman1998self,vespignani2000absorbing}.
We can think of their theory as proposing universality for sandpiles at criticality,
in that the critical state obtained by traditional tuning or by self-organization has the same
properties.
The most basic prediction of this theory is that
the density in the driven-dissipative chain converges to the critical density of the fixed-energy system.
This density conjecture was originally claimed for the stochastic
sandpile model \cite{dickman1998self} and then the abelian sandpile \cite{vespignani2000absorbing},
but it was believed to hold for ARW as well \cite{rolla2020activated}.
Combining Dickman et al.'s theory with Bak, Tang and Wiesenfeld's original picture, we expect a suitably
universal sandpile model to behave as follows: the driven-dissipative system increases in
density linearly until it arrives at the fixed-energy critical density and remains there.

The behavior of the abelian sandpile on the two-dimensional lattice has has never been rigorously
established, but Fey, Levine, and Wilson have presented convincing numerical and theoretical
evidence that it does not fit the above picture.
First, their simulations show that the stationary density of the driven-dissipative system on a box
of width~$n$ and the threshold density on a torus of width~$n$ do not converge to the same value, though the limiting densities differ only in the fourth decimal place \cite{fey2010driving}.
(The threshold density on large tori are believed to converge to that of the infinite lattice, though
this too remains unproven.)
On certain more tractable families of graphs, Fey, Levine, and Wilson rigorously
determined the stationary and threshold densities. In all cases they analyzed,
they found that the driven-dissipative chain increased linearly in density until
arriving at the threshold density but then slowly evolved from there to a different limit \cite{fey2010approach}.
This suggests that a more rapidly mixing model would exhibit more universal behavior.

Another result of Levine's shows that a different version of the threshold density has better properties.
Instead of placing particles one at a time on an empty graph until reaching
a nonstabilizable configuration, we place them on a graph that starts with a hole of depth $s$
at every vertex. A vertex must have its hole entirely filled before it can start to accumulate
particles and topple. On a single fixed graph, the threshold density starting from
this landscape converges as $s\to\infty$ to the stationary density of the driven-dissipative system
\cite{levine2015threshold}. 
This result also addresses another failure of universality,
namely that that the threshold density of the abelian sandpile on $\ZZ^2$
depends on the precise distribution of the initial configuration, in the sense that there exist ergodic
initial configurations at densities $\rho_1<\rho_2$ such that the model is stabilizable
at density~$\rho_2$ but not at density $\rho_1$ \cite{fey2005organized}.
Levine's result demonstrates that the nonuniversal behavior of the threshold density
can be corrected by allowing the system more time to mix.

In search of a universal model of self-organized criticality, attention turned to two sandpile models
with stochastic dynamics. First is the stochastic sandpile model, in which multiple particles on the same vertex
jump independently to random neighbors while lone particles remain stationary \cite{manna1991two}.
The second is ARW, introduced as a more tractable alternative.
These two models have so far been found to exhibit
similar behavior, but much more has been proven for ARW;
see \cite{rolla2020activated} for the standard reference.

Initial results on ARW focused on its fixed-energy version on $\ZZ^d$.
Even for $d=1$, there is no simple argument demonstrating the existence of either a nontrivial
fixation phase or activity phase. Considerable effort has gone into proving the existence
of both phases, culminating in a complete proof that $0<\critFE<1$ for all $\lambda>0$ and $d\geq 1$ \cite{rolla2012absorbing, basu2018nonfixation,SidoraviciusTeixeira17,StaufferTaggi18,HoffmanRicheyRolla20,hu2022active,forien2022active,asselah2024critical}. This line of research along with \cite{BasuGangulyHoffmanRichey19}
demonstrated that fixed-energy ARW on tori also exhibits a phase transition in initial density.
Other notable results include
universality for the critical density over all ergodic initial configurations of active
particles \cite{rolla2019universality}, recently generalized further in dimension one
\cite{brown2024activated}; and continuity in $\lambda$ of the critical density $\critFE(\lambda,d)$
\cite{taggi2023essential}.

For driven-dissipative ARW, less is known. One line of research to be discussed in the following section
has investigated its mixing properties on general graphs \cite{levine2021exact, bristiel2022separation}.
J\'arai, M\"onch, and Taggi analyzed driven-dissipative ARW in the mean-field setting---on a complete
graph on $n$ vertices plus a sink vertex---and found the exact limiting 
density $\rho_c$ for the stationary distribution.
They also determined a precise window of order $\sqrt{\log n/n}$ slightly above $\rho_c$ that contains
the density of a sample from the stationary distribution with probability tending to $1$ \cite{jarai2023critical}.
The remaining results we know of are for an interval in $\ZZ$.
Forien proved in this setting that starting from 
a configuration of density greater than $\critFE$, a step of the driven-dissipative chain in dimension one
has positive probability of losing a macroscopic quantity of particles \cite{forien2024macroscopic}, 
progress toward the
density conjecture. Shortly after, the density conjecture 
was proven in dimension one using different methods \cite{hoffman2024density}. Next, it was shown that the density of
the driven-dissipative chain in dimension one increases linearly to $\critFE$ and then remains
there \cite{hoffman2024proof}, consistent with
Bak, Tang, and Wiesenfeld's original vision of a self-organizing
sandpile reaching and then sustaining a critical state.
\thref{thm:mix} gives a stronger confirmation of this vision:
starting from any initial configuration, 
not only is the critical density maintained from time $\critFE n$ on, 
but the critical state has been reached and is maintained.

\subsection{Mixing time for ARW: prior results and methods}
Levine and Liang initiated the study of the mixing time of the driven-dissipative chain
in \cite{levine2021exact}, proving upper bounds of $(1+o(1))n$
on a Euclidean ball in $\ZZ^d$ of volume~$n$ with uniform
or central driving. The bound is uniform in the sleep rate $\lambda$ and a more precise
estimate is given for the $o(1)$ factor depending on $d$. Their result starts with the important
observation that the stationary distribution of the chain can be exactly sampled
by stabilizing the driven-dissipative system starting from any configuration
$\sigma_0$ containing at least one active particle at every site \cite[Theorem 1]{levine2021exact}.
We sketch the proof now. The well-known \emph{abelian property} for ARW (to be described
rigorously in Section~\ref{sec:odometers}) states that if we represent the system
as particles executing random instructions placed on the vertices of the graph,
the final state of the system after stabilization is the same regardless of the order
of topplings.
By this property, if we run the driven-dissipative chain starting with $\sigma_0$, 
then $\sigma_t$ can be viewed as the result
of stabilizing the configuration consisting of $\sigma_0$ with extra particles
added at sites $u_1,\ldots,u_t$, where
$\mathbf{u}=(u_1,u_2,\ldots)$ is the driving sequence. Applying the abelian property
again, we can stabilize this configuration by toppling sites to move the extra particles around.
Since $\sigma_0$ contains a particle at all sites, these extra particles never fall asleep
and eventually reach the sink. Now we are reduced to stabilizing $\sigma_0$, thus proving
that $\sigma_t$ is distributed as the stabilization of $\sigma_0$ for all $t\geq 1$.
Thus the chain is already at stationarity after one step. As a corollary,
this argument confirms that the stationary distribution does not depend on the driving
sequence, since it is always equal to the stabilization of $\sigma_0$. 

Levine and Liang then use this fact to bound the mixing time of ARW in terms of
\emph{internal diffusion-limited aggregation} (IDLA), which is the $\lambda=\infty$ version of ARW in which particles fall asleep immediately when alone.
The idea is as follows. Suppose we stabilize some configuration $\sigma$. By the abelian property,
we can do it in two steps: First, topple sites in any order until all particles are alone
or at the sink; call the resulting state $\sigma'$. Next, stabilize $\sigma'$. The first step
can be thought of as running the system with IDLA dynamics.

If the first step of toppling yields $\sigma'\equiv 1$, then the second stage puts it at stationarity.
Hence, once the chain has been driven long enough so that $\sigma_0+\delta_{u_1}+\cdots+\delta_{u_t}$
spreads to the entire graph under IDLA dynamics, it is mixed. 
Defining the \emph{fill time} $\Tfull$ as the first time this occurs, 
the mixing time can be bounded in terms of $\Tfull$  \cite[eq.~(10)]{levine2021exact}. 
The proof is complete once it is shown that
that $\Tfull$ is unlikely to exceed $(1+o(1))n$ where $n$ is the volume
of the graph; in many circumstances $\Tfull$ is in fact known very precisely
\cite{jerison2012logarithmic,jerison2013internal}.

Bristiel and Salez continue with this line of analysis and apply it to the \emph{separation mixing time}
\cite{bristiel2022separation}.
To define this mixing time, let $P^t(\sigma,\sigma')$ denote the probability that the driven-dissipative
chain starting from configuration $\sigma$ transitions to configuration $\sigma'$ in step~$t$.
The \emph{separation mixing time} of the chain on a given graph is
\begin{align}\label{eq:sep.dist}
  \tsep(\epsilon):= \min\biggl\{ t\colon \max_{\sigma,\sigma'}\biggl(1 - \frac{P^t(\sigma,\sigma')}{\pi(\sigma')}\biggr)<\epsilon\biggr\},
\end{align}
where $\pi$ is the stationary distribution.
The separation mixing time is an upper bound on the usual mixing time
\cite[Lemma~6.16]{levin2017markov}. Levine and Liang's argument in fact bounds $\tsep$,
since $\Tfull$ is a strong stationary time. Bristiel and Salez observe that to stabilize
$\sigma_0+\delta_{u_1}+\cdots+\delta_{u_t}$ and arrive at the configuration $\sigma'\equiv\s$,
it \emph{must} happen that $\Tfull\leq t$. This observation gives a lower bound on
$\frac{P^t(\sigma,\sigma')}{\pi(\sigma')}$ and leads to the conclusion that $\Tfull$
is \emph{separation optimal} and determines the exact value of the separation distance,
demonstrating that $\tsep=(1+o(1))n$ on a Euclidean ball of volume~$n$. 
Bristiel and Salez also compute the relaxation time of the
chain (the inverse of its spectral gap), which they use to establish cutoff for
the separation mixing time.

To find the separation mixing time of the driven-dissipative chain,
the analysis of \cite{levine2021exact,bristiel2022separation} is enough,
since the IDLA dynamics accurately represent the system in the extreme case of \eqref{eq:sep.dist}
where $\sigma\equiv 0$ and $\sigma'\equiv \s$. But to measure the total variation
from stationarity, reflecting the ability to couple the system with its stationary state,
this analysis via IDLA is insufficient.

\subsection{Methods of our proof}\label{sec:methods}

We establish a new criterion for mixing on a general graph: if enough particles have been added
so that all sites are visited under the ARW dynamics, then the system is mixed.
To state this result in full generality, we allow driven-dissipative ARW to take
place on any finite graph $G=(V\cup\{z\},E)$, and we allow particles
to move when active as continuous-time random walks with an arbitrary transition kernel $Q$
that we call the \emph{base chain}. The sink $z$ must be an absorbing state for $Q$, i.e., $Q(z,z)=1$,
and we also require that all vertices in $V$ are mutually accessible and that $z$
is accessible from $V$ under $Q$. 

Under the above assumptions,
driven-dissipative ARW on $G$ has a unique stationary distribution $\pi$
that does not depend on the driving sequence $\mathbf{u}$ \cite[Lemma~6]{levine2021exact}.  For a configuration $\sigma\in \{\s,0,1,\ldots\}^V$, we say that \emph{$\sigma$ visits all sites}
if all vertices in $V$ contain an active particle at some point during the stabilization of $\sigma$. Denote the law of the stabilization of $\sigma$ by $\stablelaw{\config}$.  Also, consider a system of independent random walks starting from each vertex in $V$ that all jump simultaneously at time steps $1,2,\hdots$ according to $Q$, and let $H$ be the first time at which all walks are at the sink.

\begin{thm}\thlabel{thm:one}
 Using the above notation,
 for any configuration $\sigma$ and integer $m \geq 1$ 
  \begin{align}\label{eq:one}
    \tvnorm{\stablelaw{\config}-\pi} &\leq m \P(\text{$\sigma$ does not visit all sites})  + \P(H \ge m).
  \end{align}  
\end{thm}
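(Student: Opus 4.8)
The plan is to build a coupling between the stabilization of $\sigma$ and a sample from the stationary distribution $\pi$, using the characterization of $\pi$ from \cite[Theorem~1]{levine2021exact}: $\pi$ is the law of the stabilization of \emph{any} configuration with at least one active particle at every site. The key idea is that if $\sigma$ visits all sites during stabilization, then at the moment the last unvisited site receives an active particle, the current configuration dominates $\onepersite$ (one active particle per site), and from that point on the stabilization of $\sigma$ agrees with the stabilization of a configuration that produces a $\pi$-sample. Concretely, I would run the stabilization of $\sigma$ using a legal toppling order that first moves particles around so as to reach all sites as quickly as possible; on the event that all sites are visited, splice the configuration into $\onepersite$ plus extra particles, invoke the abelian property to topple away the extras (which, sitting on top of a full configuration, never fall asleep and can be routed to the sink exactly as in the Levine--Liang argument sketched in the introduction), and conclude that the remaining stabilization is distributed as $\stablelaw{\onepersite}=\pi$. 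This shows $\tvnorm{\stablelaw{\config}-\pi}\leq\P(\text{$\sigma$ does not visit all sites})$ — but that is too weak, since on general graphs the probability of visiting all sites need not be close to $1$ for a single stabilization.

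To get the stated bound I would iterate: stabilize $\sigma$; if it fails to visit all sites, the result is some stable configuration $\sigma^{(1)}$, and we can think of the \emph{next} $m-1$ attempts as driving the chain further. More precisely, I would use the driving sequence: after adding particles at $u_1,\dots,u_t$ and stabilizing repeatedly, the resulting chain is $(\sigma_t)$. The point is that once \emph{any} stabilization in the sequence visits all sites, the chain is at stationarity forever after (again by the abelian/Levine--Liang argument: from a full configuration onward, added particles are surplus and drain to the sink). So the coupling fails only if none of the relevant stabilizations visits all sites. Bounding this failure probability by $m\,\P(\text{$\sigma$ does not visit all sites})$ via a union bound is the natural move, but one must be careful that each of the $m$ attempts stochastically dominates an independent copy of ``stabilize $\sigma$'' in terms of the visiting event — monotonicity of ARW under adding particles (the abelian property again) gives this, since each $\sigma_t$ dominates the empty configuration plus a fresh batch of added particles, and adding more particles only makes visiting all sites more likely. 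This is where the factor $m$ comes from.

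The role of $\P(H\geq m)$ is to handle the fact that ``being at stationarity'' needs to happen within $m$ steps of the chain for the total-variation bound at a fixed time to follow — or, read differently, $H$ provides a concrete, analyzable upper bound on how long one must drive before a full configuration is surely reached. I would relate the number of added particles needed to fill all sites to the system of independent walkers: the collection of walks from every vertex, jumping in lockstep according to $Q$, all reach the sink by time $H$, and a domination/abelian argument shows that after $H$ batches of driving the driven configuration visits all sites (each walker's trajectory can be realized by the toppling instructions). So on the event $\{H<m\}$ the coupling has succeeded by time $m$, contributing the $\P(H\geq m)$ term, while on the complementary slice where visiting fails we pay the first term.

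\textbf{Main obstacle.} The delicate point is making the union bound rigorous: the $m$ ``attempts'' are not literally i.i.d.\ copies of stabilizing $\sigma$, because each is a stabilization of a configuration built from the accumulated history, so I need a clean monotone coupling (via the abelian property and the instruction/odometer formalism promised in Section~\ref{sec:odometers}) showing that the probability that \emph{none} of the first $m$ stabilizations visits all sites is at most $m$ times the single-attempt failure probability — equivalently, that conditioned on past failures, the next stabilization still has visiting probability at least $\P(\sigma\text{ visits all sites})$. Establishing this domination carefully, and checking that ``visits all sites on one of the first $m$ steps'' genuinely forces the chain into $\pi$ from then on (so that the TV distance at the single time of interest is controlled), is the heart of the argument.
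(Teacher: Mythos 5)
Your proposal has two serious gaps, both of which the paper's actual argument is designed specifically to circumvent.

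First, the claim that ``at the moment the last unvisited site receives an active particle, the current configuration dominates $\onepersite$'' is false. Visiting all sites means each site holds an active particle at \emph{some} moment during stabilization, but these moments occur at different times; no single intermediate state need have a particle at every site simultaneously. Indeed, if $\sigma$ contains fewer than $|V|$ particles, such a state is impossible, yet $\sigma$ can still visit all sites. This is exactly the difficulty the preemptive abelian property (\thref{lem:pap}) is introduced to resolve: it permits waking a sleeping particle \emph{before} activity reaches it, provided it will eventually be visited, without altering the final stabilized state. Without this mechanism, ``visits all sites'' gives you no usable domination of $\onepersite$, and your splicing step has nothing to splice.

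Second, your iteration scheme bounds the wrong quantity. \thref{thm:one} compares $\stablelaw{\config}$, the law of a \emph{single} stabilization of $\sigma$, to $\pi$. Running the driven chain $(\sigma_t)$ for $m$ steps and arguing it eventually hits a full state bounds $\tvnorm{\P(\sigma_m\in\cdot)-\pi}$, which is a different object; you appear to have conflated this theorem with the mixing-time statement \thref{thm:mix}. Relatedly, $H$ in the paper is not ``how long one must drive to fill the graph'': it is the time for independent walkers started at every vertex — equivalently, the fictional $\onepersite$ surplus present in the $\pi$-sample $\stablelaw{\onepersite+\config}$ but absent from $\stablelaw{\config}$ — to drain to the sink. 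The factor of $m$ arises not from $m$ steps of the chain but from $m-1$ rounds of jumping this surplus, each round requiring one fresh ``$\sigma$ visits all sites'' event whose probability is exactly $p$ by the strong Markov property for quenched instructions (no stochastic-domination argument is needed, since the street sweepers $\sigma$ never move between rounds). The actual proof decomposes $\pi=\stablelaw{\onepersite+\config}$ via Levine--Liang, treats $\sigma$ as street sweepers and $\onepersite$ as cars, and uses the preemptive jump lemma (\thref{lem:preemptive.jump}) to shuttle the cars off the graph without parking; that decomposition is the missing idea in your proposal.
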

One should think of the right-hand side of \eqref{eq:one} as essentially $p:=\P(\text{$\sigma$ does not visit all sites})$, with $m$ and $\P(H\geq m)$ negligible.
In our applications, $p$ will be exponentially small,
$m$ polynomial, and $\P(H \geq m)$ exponentially small as functions of $\abs{V}$.

We prove this theorem with the help of what we call the \emph{preemptive abelian property} (\thref{lem:pap}).
We cannot state the property formally without the notation in Section~\ref{sec:odometers},
but the idea is that if a sleeping particle will eventually be visited, we may preemptively wake it
without altering the final stabilized state of the system. 
On first encounter, this may appear of limited use since only with foreknowledge of the stabilization do we obtain extra freedom in computing the stabilization. While this may be true from a deterministic perspective, we combine the property with probabilistic guarantees to prove our \emph{street-sweeper lemma} (\thref{lem:street-sweeper}). 
The name is inspired by New York City regulations
that prevent a car from parking in the present because a street sweeper may pass by in the
indefinite future. Start with a collection of particles called ``street sweepers'' and add additional ``car" particles. The lemma says that if ARW restricted to street sweepers visits every site, then ARW with only street sweepers can be coupled to give the same final configuration as ARW with both street sweepers and cars,
because the preemptive abelian property allows the cars to move without parking until they
drive off the graph. 

To obtain \thref{thm:one}, we place a car at every site and street sweepers at the sites of uniform or centralized driving. By Levine and Liang’s exact sampling theorem, the stabilization involving both the street sweepers and cars has exactly the stationary distribution. By the street-sweeper lemma, so long as the street sweepers are likely to visit every site, disregarding the cars and stabilizing only the street sweepers results in the same law i.e., exactly stationary.
The $m$ and $\P(H\geq m)$ error terms come from a union bound over the number of steps at which the street sweepers may fail to topple the cars. A powerful, and perhaps on first encounter surprising, aspect of the bound is that we need not guarantee multiple site visits from the street sweepers. Using a Markov property, the street sweepers can remain in place while the high probability threat of at least one visit to each site is recycled, paying a $p$ penalty with each application.

\thref{thm:one} lets us bound the mixing time in terms of the density of particles that
must be added to a system to ensure all sites are visited. This density should be $\critFE$, since it is believed that
ARW starting from any density above $\critFE$ should behave in a supercritical manner
with activity everywhere. We prove \thref{thm:mix} via \thref{thm:one} and a simple sufficient condition for an initial configuration to produce activity at a specified site (\thref{prop:exit.criterion}). A reader who understands the \emph{layer percolation} process introduced in \cite{hoffman2024density} and its correspondence to ARW will see \thref{prop:exit.criterion} as a fairly straightforward application.
Nonetheless, we view the proof, which transforms the question of ARW activity at a site to one about the growth of critical Galton-Watson processes, as a nice illustration of layer percolation's usefulness.
In higher dimensions, a mixing time bound will have to wait for a proof
that a density of particles above $\critFE$ visits all sites with high probability,
a result which would likely accompany any resolution of the density conjecture.

\subsection{Organization}
In Section~\ref{sec:odometers}, we describe the sitewise representation of ARW and introduce further notation. Section~\ref{sec:general}  contains the preemptive abelian property, street-sweeper lemma, and \thref{thm:one}. Section~\ref{sec:dim1} proves \thref{prop:exit.criterion} and derives \thref{thm:mix} as a consequence.

\section{Notation for the sitewise construction}\label{sec:odometers}
We give the \emph{sitewise construction} of ARW, which makes it an abelian network.
See \cite[Section~2.2]{rolla2020activated} for another reference.
Though this material is standard, we take the opportunity to set our notation and define
some operators that alter the state of the ARW configuration.

To give the idea of the construction before the details, we place a stack of random instructions 
$\bigl(\Instr_v(j),\,j=1,2,\ldots\bigr)$ on each site $v\in V$.
We will specify the distribution of the instructions eventually, but for now the instructions
should be taken as arbitrary deterministic sequences.
Each instruction tells a particle
on that site to try to sleep or to jump to a certain neighbor. 
Then ARW is constructed as a continuous-time Markov chain in which
active particles execute the top instruction
on their stack at rate~$1$. When an instruction is executed, we say that the its site is \emph{toppled}.
The \emph{odometer} of a sequence of topplings is the function on $V$ counting the number of topplings
at each site.
A configuration $\sigma$ is stable on a set $W$ if it contains no active particles there,
i.e., $\sigma(v)\in\{0,\s\}$ for $v\in W$.

The reason the sitewise representation is so useful is its \emph{abelian property}:
all sequences of topplings within a finite set of vertices $W$ that leave a configuration
stable on $W$ have the same odometer \cite[Lemma~2.4]{rolla2020activated}. This means that the final state
of the system depends only on the instructions and not on their order of execution.

We will represent the \emph{state} of an ARW system as a pair $(\sigma,f)$ where 
$\sigma\in\{\s,0,1,\ldots\}^V$ is a configuration
and $f\colon V\to \NN$ is \emph{the running odometer}, 
representing the topplings that have occurred so far
and specifying which instruction is next to be executed at each site. Odometer $\odom$ \textit{dominates} $\odomm$ if it does so pointwise, and \textit{strictly dominates} $\odomm$ if further there is a site $v$  where $\odom(v) > g(v)$. 
We use the convention $\abs{\s}=1$ so that $\abs{\sigma(v)}$ is the number of particles in $\sigma$
at $v$, sleeping or not. We extend the ordering of the integers as well as their addition to the value $\s$ by letting $0 < \s < 1 < \ldots$, setting $0 + \s = \s = \s + 0$, and for $k \ge \s$ letting $\s + k = k + \s = |k| + 1$. Configurations are added pointwise, so $(\config + \tau)(v) = \config(v) + \tau(v)$.

We define the following notation:
\begin{description}
  \item[support of a configuration] The support of a configuration $\sigma$, denoted
    $\supp \config$, is the set of sites $v\in V$ for which $\config(v) \neq 0$.
  \item[toppling operator ${\Topple[v]}$] For a configuration $\sigma$ with a particle at vertex~$v$ (i.e., satisfying
    $v\in\supp \sigma$), the operator
    $\Topple[v]$ acts on state $(\sigma,f)$ by executing
    the next instruction at $v$ and incrementing $f$ at $v$.
    That is, $\Topple[v](\sigma,f)=(\sigma',f+\delta_v)$,
    where $\sigma'$ is the configuration that results from executing the instruction
    $\Instr_v(f(v))$. For example, if $\Instr_v(f(v))$ is a jump instruction from $v$ to $w$
    and $\sigma(v)=2$ and $\sigma(w)=\s$, then $\sigma'$ is identical to $\sigma$
    except that $\sigma'(v)=1$ and $\sigma'(w)=2$.
    See \cite[Section~2.2]{rolla2020activated} for the complete formalism.
    
    If the particle at $v$ was active, then the toppling is \emph{legal}, and $\Topple[v]$ is \emph{legal} for $\state$. If the particle
    at $v$ was sleeping, then toppling site~$v$ violates the dynamics of the model, but $\Topple[v]$
    is still defined. In this case, the toppling is \emph{acceptable}, and $\Topple[v]$ is \emph{acceptable} for $\state$. 

  
  \item[stabilization operator {$\Stab[U]$}] The operator $\Stab[U]$ alters the state
    by legally toppling vertices in $U\subseteq V$ until the configuration is stable on $U$, i.e.,
    $\Stab[U](\sigma,f) = \Topple[v_n]\Topple[v_{n-1}]\ldots \Topple[v_1](\sigma,f)$ for any 
    legal toppling sequence of vertices in $U$ yielding a state whose configuration is stable
    on $U$.
    The operator is well defined by the abelian property. We define $\Stab=\Stab[V]$,
    the operator stabilizing the entire graph (except the sink).
    
    Let $\Stab[U](\sigma,f)=(\sigma',f')$. We call $\sigma'$, $f'$, and $(\sigma',f')$
    the \emph{stabilized configuration},
    \emph{stabilizing odometer}, and \emph{stabilized state}, respectively, of $(\sigma,f)$ on $U$.
    And in all these cases we drop the mention of $U$ when $U=V$. We also call $\odom'$ the \emph{odometer stabilizing} $\state$ (on $U$). 
    
    We often use the identity $\Stab (\Stab[U] \state) = \Stab \state$, which follows from the abelian property. In other words, $\Stab[U] \state$ has the same stabilized state as $(\sigma,f)$. 
    
    Another useful fact is that if $\sigma$ and $\sigma'$ agree on $U$, then their stabilized states
    on $U$ also agree. That is,
    for \begin{align*}
        (\configg, \odomm) &:= \Stab[U](\sigma, f)\\
        (\configg', \odomm')&: = \Stab[U](\config', f).
    \end{align*}
    Then $\odomm=\odomm'$ and $\configg\rvert_U=\configg'\rvert_U$. Outside of $U$ both configurations increase by the same number of active particles, meaning for each $v \notin U$ there is a $k_v \in \NN$ such that $\configg(v) = \config(v) + k_v$ and $\configg'(v) = \config'(v) + k_v$. 
    
  \item[activation operator {$\A[U],\A[v]$}] The operator $\A[U]$ activates any sleeping particles in $U \subseteq V$ without modifying the running odometer. That is $\A[U]\state = (\config',\odom)$ where $\config'$ differs from $\config$ by setting $\config'(v) = 1$ for every $v \in U$ such that $\config(v) = \s$. By convention $\A=\A[\V]$ and $\A[v]=\A[\{v\}]$. Note that acceptably toppling $v$ decomposes as activating and then legally toppling it. Thus, $\A[v]\state$ and $\Topple[v]\state = \Topple[v]\A[v]\state$ have the same stabilized state by the abelian property.
    
  \item[jump operator {$\Jump[v],\Jump,\Jump[\config],\Jump[\config,n]$}] For a configuration $\sigma$ containing
    a particle at $v$, the operator $\Jump[v]$ acts on $(\sigma,f)$ by performing (acceptable) topplings at $v$
    until executing a jump instruction. We call this operation \emph{jumping} a particle at $v$.
    The operator $\Jump$ jumps every particle in the configuration, i.e.,

    $$\Jump \state = (\Jump[v_n])^{|\config(v_n)|}(\Jump[v_{n-1}])^{|\config(v_{n-1}|)}\ldots (\Jump[v_1])^{|\config(v_1)|}\state$$
    where $\{v_1,\ldots,v_n\}$ is the support of $\config$. This operation does not depend on the order of $v_1,\ldots,v_n$ because it decomposes into a sequence
    of acceptable topplings that can be freely reordered by the local abelian property \cite[Property~1, Section~2.2]{rolla2020activated}. Also, note that after jumping every particle, there are no sleeping particles in the new configuration.
    
    Now let $\configg$ be a configuration of active particles. It is easy to show that the topplings in $\Jump \state$ are also acceptable starting in state $(\config + \configg, \odom)$, and lead to state $\Jump \state + (\configg, \zeroconfig)$, as the active particles of $\configg$ never move. Thus we define \emph{jumping $\config$}:$$\Jump[\config](\config + \configg, \odom) = \Jump \state + (\configg, \zeroconfig),$$ which decomposes as a sequences of acceptable topplings. Iterating then, jumping $\config$ multiple times still decomposes as an acceptable toppling sequence:
    $$\Jump[\config,n](\config + \configg, \odom) = \Jump^n \state + (\configg, \zeroconfig).$$

  \item[visiting a vertex] A state $\state$ \emph{visits $v$} if $v$ is toppled at least once
    during the stabilization of $\state$. Another equivalent formulation is that its running odometer at $v$ is under the stabilizing odometer, that is $\odom(v) < \trueodom(v)$ where $\trueodom$ is the odometer stabilizing $\state$.
    And another is that site~$v$ contains an active
    particle in one of the intermediate states (including the initial one) in the course of the stabilization
    of $\state$. A fourth is that $\config'(v) \ge 1$ where $\config'$ is the configuration of $\Stab[\{v\}^c] \state$. Conversely, $v$ is not visited by $\state$ if and only if $\config'(v) \in \{0, \s\}$.
    
    A subset $U \subseteq \V$ is visited by $\state$ if every site in $U$ is. 
    In keeping with the notation from the introduction, we say that $\sigma$
    visits $v$ or $U$ in place of $(\sigma,0)$ doing so.

\item[preemptive operations] Adding to the notion of a toppling being legal/acceptable, $\Topple[v]$ is \textit{preemptive} for $\state$ if it is acceptable and $\state$ visits $v$. 
We now have a three-tiered hierarchy of topplings: legal topplings are preemptive, which in turn are acceptable. Similarly, $\A[v]$ (resp.\ $\A[U]$) is preemptive for $\state$ if $\state$ visits $v$ (resp.\ $U$).
\end{description}

Finally now we construct a Markov process with the ARW dynamics using the sitewise
representation. Let the instructions $\Instr_v(j)$ be i.i.d.\ over all $v\in V$ and $j\geq 1$.
Each instruction takes the value \Sleep\ with probability $\lambda/(1+\lambda)$,
and otherwise is a jump instruction. Given that the instruction
at $v$ is a jump instruction, the distribution of its destination is given by $Q(v,\cdot)$, 
where $Q$ is the base chain.
Let the initial state be $(\sigma_0,0)$.
When the state is $(\sigma,f)$, each non-sink vertex~$v$ is toppled at rate given by the number
of active particles at $v$ in $\sigma$, at which time it transitions to state $\Topple[v](\sigma,f)$.
In our setting of finite graphs, this defines a finite-state continuous-time Markov chain.
On infinite graphs with reasonable initial configurations,  the Markov chain exists,
either by standard construction results for interacting particle systems or
by an explicit graphical construction (see \cite[Chapter~11]{rolla2020activated}).

\section{Bounds on general graphs}\label{sec:general}

We now have the notation to state and prove some new observations about ARW culminating with \thref{thm:one}. We begin with the preemptive abelian property, which states
that if a sleeping particle will be activated at some point in the system's stabilization, then we can activate it immediately without affecting the final stabilized state.

\begin{lemma}[Preemptive abelian property] \thlabel{lem:pap}
Suppose $v$ is acceptable to topple in state $\state$. If $(\sigma,f)$ visits $v$, then $(\sigma,f)$ has the same stabilized state
as $\Topple[v]\state$ and $\A[v]\state$. If $(\sigma,f)$ does not visit $v$, then the common stabilizing
odometer for $\Topple[v]\state$ and $\A[v]\state$ strictly dominates the odometer stabilizing $(\sigma,f)$.
\end{lemma}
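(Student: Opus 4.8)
The plan is to treat the two cases separately, but in both to start from the key decomposition $\Topple[v]\state = \Topple[v]\A[v]\state$ noted in the definition of $\A[v]$, so that it suffices to analyze $\A[v]\state$ and then peel off one legal toppling. So fix $\state$ with $v$ acceptable to topple, let $(\trueconfig,\trueodom)=\Stab\state$, and let $(\config',\odom') = \A[v]\state$, where $\odom'=\odom$ and $\config'$ agrees with $\config$ except that $\config'(v)=1$ if $\config(v)=\s$ (if $\config(v)$ was already active, $\A[v]\state=\state$ and the first assertion is immediate; so assume $\config(v)=\s$). Since $\state$ and $\A[v]\state$ have identical running odometers and configurations agreeing off the single site $v$, I want to compare their stabilizations by running one legal toppling sequence and checking it works for both.

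For the first case, suppose $\state$ visits $v$. Take any legal toppling sequence for $\state$ realizing $\Stab\state$; by the definition of ``visits $v$'' there is a first time this sequence topples $v$, at which instant $v$ holds an active particle in the $\state$-run. Run the \emph{same} sequence of vertices starting from $\A[v]\state$. Up to that first toppling of $v$, every toppled site has the same number of active particles in both runs (the only configuration difference is the sleeping-vs-active particle sitting untouched at $v$), so all these topplings are legal in the $\A[v]$-run too, and the two runs stay coupled except at $v$. At the first toppling of $v$: in the $\state$-run this is a legal toppling of a particle that was just activated by an incoming jump, so it reads instruction $\Instr_v(\odom(v))$; in the $\A[v]$-run the particle at $v$ is the pre-activated one, also reading $\Instr_v(\odom(v))$ — so the two configurations \emph{coincide} from this point on, and the remaining topplings of the original sequence are simultaneously legal for both. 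Hence the same sequence stabilizes both $\state$ and $\A[v]\state$, giving equal stabilized states; combined with $\Topple[v]\state = \Topple[v]\A[v]\state$ and the fact that $\Topple[v]\A[v]\state$ has the same stabilized state as $\A[v]\state$ (acceptable toppling is activate-then-legally-topple, and one legal toppling doesn't change the stabilized state), all three stabilized states agree.

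For the second case, suppose $\state$ does not visit $v$. Then in $\Stab\state$ the site $v$ is never toppled, and $v$ ends stable with $\trueconfig(v)=\s$ (it must still be the sleeping particle, since it's never activated). Now compare with $(\config',\odom')=\A[v]\state$: here $v$ starts with an \emph{active} particle. I would first observe the general monotonicity fact — configurations that agree except for one site having an extra active particle have odometers that dominate pointwise (this is the standard abelian-network monotonicity: adding a legal toppling at $v$ to $\Stab\state$ forces at least as much work everywhere; one can run the $\Stab\state$ toppling sequence from $\A[v]\state$, reach a configuration that differs from $\trueconfig$ only by one active particle at $v$, which is \emph{not} stable, then continue toppling). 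So the stabilizing odometer of $\A[v]\state$ dominates $\trueodom$. For strictness, note that after running the $\Stab\state$-sequence from $\A[v]\state$ we are left with exactly one active particle at $v$ and everything else stable, so at least one more toppling at $v$ is required: the stabilizing odometer of $\A[v]\state$ exceeds $\trueodom$ at $v$. Finally $\Topple[v]\state=\Topple[v]\A[v]\state$ and $\Topple[v]$ only increases the running odometer (at $v$) without changing which state you eventually stabilize to, so the stabilizing odometer of $\Topple[v]\state$ equals that of $\A[v]\state$, and the claim follows.

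The main obstacle is the second case, specifically making the ``stabilizing odometer dominates'' step fully rigorous: one must argue cleanly that running the legal toppling sequence of $\state$ is still a legal (or at least acceptable) sequence from $\A[v]\state$, so that the abelian property licenses completing it to a genuine stabilization, and that the leftover configuration genuinely needs strictly more topplings at $v$ rather than this being absorbed elsewhere. I expect the ``agree-on-$U$ implies stabilized states agree'' fact quoted in the definition of $\Stab[U]$ (applied with $U = \{v\}^c$, or to the relevant partial run) to do most of the bookkeeping here, reducing everything to the single-site comparison at $v$.
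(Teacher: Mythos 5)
Your proof is correct, and the substance is the same as the paper's: both arguments isolate the work done off $v$ and observe that the sleeping-versus-active distinction at $v$ disappears the moment a particle arrives (since $\s + k = 1 + k$ for $k\geq 1$), while persisting if none arrives ($\s$ is stable, $1$ is not), which is exactly where the strictness in the second case comes from. Where you differ is in packaging: the paper applies the operator $\Stab[\{v\}^c]$ to both $\state$ and $\A[v]\state$, which does all off-$v$ topplings at once and reduces the entire comparison to a single side-by-side look at the value $k$ of particles deposited at $v$; you instead run a full legal toppling sequence for $\Stab\state$ from $\A[v]\state$ and identify the first toppling of $v$, coupling the two runs step by step. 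Both are valid --- the abelian property makes them interchangeable --- but the $\Stab[\{v\}^c]$ route handles the two cases of the lemma through a single computation of $k$ rather than two separate coupling arguments, and avoids the step-by-step bookkeeping you felt obliged to flag. In particular, your worry at the end is misplaced: the second case is actually the \emph{easier} one in either formulation, since when $v$ is not visited no particle ever reaches $v$, the two runs coincide exactly off $v$ throughout, and the leftover active particle at $v$ forces at least one extra toppling there; nothing can be ``absorbed elsewhere'' because the off-$v$ configuration is already stable. If you want to tighten your write-up, the main thing to fix is the phrase ``differs from $\trueconfig$ only by one active particle at $v$'' --- the particle count at $v$ is the same in both ($1$ versus $\s$, each of absolute value $1$); what differs is the sleeping status, and that alone is what makes the configuration unstable there.
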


\begin{proof}
    First, since $\Topple[v]\state \text{ has the same stabilized state as } \A[v]\state$, it suffices to prove the claim for $\A[v]\state =: (\config', \odom)$. Moreover, we only need to consider the case where $\config(v) = \s$. Otherwise, the claim is trivial: $\config' = \config$, and $\state$ visits $v$ since $\config(v) \ge 1$.

 Both $\state$ and $(\config', \odom)$ can be stabilized by first stabilizing $\{v\}^c$.
 This produces the same running odometer $\odomm$ since $\config$ and $\config'$ agree off of $v$.
 So let $$(\configg, \odomm) := \Stab[\{v\}^c]\state$$ and 
    $$(\configg', \odomm) := \Stab[\{v\}^c](\config', \odom),$$ which again have the same respective stabilized states as $\state, (\config', \odom)$. The configurations $\configg$ and $\configg'$ agree off of $v$, and $\configg(v) = \s + k$, $\configg'(v) = 1 + k$, where $k$ is the number of particles sent to $v$ when stabilizing $\state$ or $(\sigma',f)$
    on $\{v\}^c$.

     We claim that $k=0$ if and only if $\state$
     does not visit $v$. Indeed, if $k=0$ then $\configg(v)=\s$, implying that stabilizing $(\sigma,f)$ on $\{v\}^c$ also stabilizes it at $v$. Thus $(\sigma,f)$
     is stabilized without toppling $v$.
     Conversely, if $k\geq 1$ then a sequence of legal topplings starting from $(\sigma,f)$ 
     has sent a particle to $v$.
     
     Applying the claim, if $(\sigma,f)$ visits $v$, then $k\geq 1$. Therefore $\tau$ and $\tau'$ agree at $v$ and hence everywhere. Thus $(\configg, \odomm) = (\configg', \odomm)\text{ and indeed }(\config,\odom),(\config',\odom)\text{ have the same stabilized states}$. And if $\state$ does not visit $v$, then $k=0$ and $\configg$ is stable. Thus $\odomm$ is the odometer stabilizing $\state$. On the other hand $\configg'$ is not stable at $v$, so the odometer stabilizing $(\config', \odom)$ strictly dominates $\odomm$. 
\end{proof}

\begin{cor}\thlabel{cor:AU}
If activating $U$ is preemptive for $\state$, then $\A[U]\state$ has the same stabilized state as $\state$.
\end{cor}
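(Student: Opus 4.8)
The plan is to peel off the sites of $U$ one at a time and invoke the preemptive abelian property (\thref{lem:pap}) at each step. Enumerate $U=\{v_1,\dots,v_m\}$ and, for $0\le i\le m$, let $\config_i$ be the configuration obtained from $\config$ by activating the sleeping particles (if any) at $v_1,\dots,v_i$, so that $\config_0=\config$, $(\config_m,\odom)=\A[U]\state$, and $(\config_i,\odom)=\A[v_i](\config_{i-1},\odom)$. I will show by induction on $i$ that $(\config_i,\odom)$ visits every site of $U$ and has the same stabilized state as $\state$; taking $i=m$ then yields the corollary.

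The base case $i=0$ is exactly the hypothesis that activating $U$ is preemptive for $\state$. For the inductive step, assume $(\config_{i-1},\odom)$ visits all of $U$, and in particular $v_i$. If $\config_{i-1}(v_i)\neq\s$, then $\A[v_i]$ acts as the identity, so $(\config_i,\odom)=(\config_{i-1},\odom)$ and the invariant passes trivially. Otherwise $v_i\in\supp\config_{i-1}$, so $v_i$ is acceptable to topple in $(\config_{i-1},\odom)$; since this state also visits $v_i$, \thref{lem:pap} gives that $(\config_i,\odom)=\A[v_i](\config_{i-1},\odom)$ has the same stabilized state as $(\config_{i-1},\odom)$, hence as $\state$ by the induction hypothesis. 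It remains to check that $(\config_i,\odom)$ still visits all of $U$: the states $(\config_{i-1},\odom)$ and $(\config_i,\odom)$ share the running odometer $\odom$ and, by the previous sentence, the same stabilizing odometer $\trueodom$; since a state visits a vertex $w$ precisely when its running odometer at $w$ lies strictly below its stabilizing odometer at $w$, the two states visit exactly the same sites, and in particular $(\config_i,\odom)$ visits all of $U$.

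The only point requiring care is the invariant that $(\config_i,\odom)$ continues to visit $U$: a priori, activating $v_1$ could conceivably leave some $v_j$ no longer visited. This is ruled out because \thref{lem:pap} preserves the full stabilizing odometer, not merely the stabilized configuration, so the visited set is literally unchanged at each step — so there is no genuine obstacle here, only bookkeeping. (An alternative would be to imitate the proof of \thref{lem:pap} directly, stabilizing $U^c$ first and comparing the particle counts deposited at the sites of $U$ in the two systems, but the inductive route is cleaner now that the single-site statement is in hand.)
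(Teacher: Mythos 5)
Your proof is correct and takes the same route as the paper: peel off the sites of $U$ one at a time and apply the single-site preemptive abelian property (\thref{lem:pap}) at each step, noting that the stabilizing odometer and running odometer are both unchanged so the visited set stays the same. The paper's version is a single terse sentence that leaves the inductive bookkeeping implicit (and reads slightly circularly if one is not careful); your explicit induction — invariant: same stabilized state and $U$ still visited — is the careful way to say the same thing.
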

\begin{proof}
    $\A[U]\state$ is performed by successively activating each $v \in U$. None of these activations modify the stabilizing odometer nor the running odometer $\odom$, so each one is preemptive.
\end{proof}

To explain why Lemma~3.1 is a preemptive form of the abelian property, note that it implies that a system can by stabilized by any sequence of preemptive topplings leaving it stable, a direct generalization of the abelian property from legal to preemptive topplings. An alternative perspective is that a sequence of acceptable topplings does not affect the stabilized state so long as none of the illegal topplings push the running odometer over the stabilizing odometer.

Next, we demonstrate an abelian property for jumps. Recall that when $\configg$ is a configuration of active particles, then starting from $\config + \configg$, \emph{jumping} $\config$ means toppling each particle in $\sigma$ until it jumps to a neighboring site.
If the non-jumped particles $\configg$ visit $\supp \config$, then jumping $\config$ does not affect the stabilized state:
\begin{lemma}\thlabel{lem:preemptive.jump}

Let $\configg$ be a configuration of active particles, and suppose that $(\configg, \odom)$ visits $\supp \config$. Then $\Jump[\config](\config + \configg, \odom)$ has the same stabilized state as  $(\config + \configg, \odom)$. 
\end{lemma}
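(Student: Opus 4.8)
The plan is to reduce the statement to repeated application of the preemptive abelian property (\thref{lem:pap}), exploiting the fact that $\Jump[\config]$ decomposes into a sequence of acceptable topplings. Recall from the construction of $\Jump[\config]$ that, starting in state $(\config+\configg,\odom)$, jumping $\config$ means performing, for each site $v\in\supp\config$, exactly $\abs{\config(v)}$ blocks of acceptable topplings, where each block topples $v$ until a jump instruction is executed. Since $\Jump[\config](\config+\configg,\odom)=\Jump\state+(\configg,\zeroconfig)$ with $\state=(\config,\odom)$, none of these topplings ever moves a particle of $\configg$; they only move particles of $\config$ around (and off the graph). So it suffices to show that every acceptable toppling in this decomposition is in fact \emph{preemptive} for the state in which it is applied: once that is established, \thref{lem:pap} says each toppling preserves the stabilized state, and composing finitely many such topplings gives the claim.

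First I would set up the induction. List the acceptable topplings of $\Jump[\config]$ in their natural order as $\Topple[w_1],\ldots,\Topple[w_N]$, and write $\state_0=(\config+\configg,\odom)$, $\state_i=\Topple[w_i]\state_{i-1}$. I claim by induction on $i$ that $\Topple[w_i]$ is preemptive for $\state_{i-1}$ and that $\state_{i-1}$ has the same stabilized state as $\state_0$. The base case and the propagation of "same stabilized state" are immediate from \thref{lem:pap} once we know preemptivity, so the real content is: \emph{$\state_{i-1}$ visits $w_i$.} Here is where the hypothesis that $(\configg,\odom)$ visits $\supp\config$ enters. Because $\state_{i-1}$ has the same stabilized state as $\state_0$, and $\state_0$ dominates $(\configg,\odom)$ in configuration (it has the extra particles $\config$), the stabilizing odometer of $\state_{i-1}$ pointwise dominates that of $(\configg,\odom)$; this is a monotonicity fact — adding active particles only increases the odometer — which I would either cite from the abelian-network literature or derive quickly from the "stabilize $\{w_i\}^c$ first" argument used in the proof of \thref{lem:pap}. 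Since $(\configg,\odom)$ visits every site of $\supp\config$, its stabilizing odometer strictly exceeds $\odom$ at each $w\in\supp\config$; hence so does the stabilizing odometer of $\state_{i-1}$. Now $w_i$ is always a site of $\supp\config$ (jumps only happen at sites holding particles of $\config$), and the running odometer of $\state_{i-1}$ at $w_i$ is still below the stabilizing odometer of $\state_{i-1}$ at $w_i$ — because we have only executed acceptable topplings that keep the stabilized state fixed, so the running odometer at $w_i$ cannot have reached the stabilizing value without the configuration already being stable there, contradicting that $w_i$ still holds a particle to be jumped. Therefore $\state_{i-1}$ visits $w_i$, i.e. $\Topple[w_i]$ is preemptive, completing the induction.

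The main obstacle I anticipate is making the monotonicity argument precise and confirming that the running odometer at $w_i$ genuinely stays strictly below the stabilizing odometer throughout the process — in other words, ruling out that some earlier toppling in the $\Jump[\config]$ sequence "used up" the visits to $w_i$ and left a sleeping particle there that the next block would then illegally wake. The clean way to handle this is to phrase everything in terms of the stabilizing odometer $\trueodom$ of $\state_0$ (well-defined and equal for all $\state_i$ by the inductive "same stabilized state" claim): one shows $\odom \le \odom_i \le \trueodom$ pointwise for every running odometer $\odom_i$ appearing, with strict inequality at $w\in\supp\config$ coming from the visit hypothesis via monotonicity, and then each acceptable toppling at $w_i$ with running-odometer value $<\trueodom(w_i)$ is automatically preemptive by the definition of "visits." Once this bookkeeping is in place the lemma follows by the telescoping application of \thref{lem:pap}; alternatively, one can invoke the remark after \thref{lem:pap} that a sequence of acceptable topplings preserves the stabilized state as long as no illegal toppling pushes the running odometer past the stabilizing odometer, which is exactly what the bound $\odom_i\le\trueodom$ guarantees.
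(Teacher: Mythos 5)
Your framework matches the paper's — decompose $\Jump[\config]$ into acceptable topplings and telescope \thref{lem:pap}, reducing everything to the bound that the intermediate running odometers $\odom_i$ stay below the stabilizing odometer $\trueodom$ of $(\config+\configg,\odom)$ — but your way of establishing that bound does not close the case you yourself flag as ``the main obstacle.'' Monotonicity plus the visit hypothesis prove only $\odom(w)<\trueodom(w)$ for $w\in\supp\config$, which handles the \emph{first} toppling at each site. Jumping $|\config(w)|$ particles can raise the odometer at $w$ far past $\odom(w)+1$, and your reason that it cannot reach $\trueodom(w)$ — that this ``would mean the configuration is already stable there, contradicting that $w_i$ still holds a particle to be jumped'' — is not actually a contradiction: a lone sleeping $\config$-particle at $w_i$ (exactly what happens when a mid-jump sleep instruction puts the only particle there to sleep) is a stable configuration that still holds a particle to be jumped. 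The comparison between $(\configg,\odom)$ and $(\config+\configg,\odom)$ controls $\trueodom$ against the starting odometer $\odom$, not against $\odom_i$ after several topplings at $w$, so the needed bound $\odom_i\le\trueodom$ is precisely the step that remains unproved.

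The paper fills the gap with a direct, site-by-site argument you would need to supply. Fix $v\in\supp\config$ and let $m_v$ be the running odometer at $v$ after $\Jump[\config]$, the index of the $|\config(v)|$-th jump instruction in $\Instr_v$ past position $\odom(v)$. To show $m_v\le\trueodom(v)$, exhibit a \emph{legal} toppling sequence for $(\config+\configg,\odom)$ whose odometer at $v$ reaches $m_v$: first carry out the legal stabilization of $(\configg,\odom)$ on $\{v\}^c$ (also legal for $(\config+\configg,\odom)$ since $\config$ consists only of active particles), which by the visit hypothesis delivers $k\ge1$ active particles to $v$ so that $v$ holds at least $|\config(v)|+1$ active particles; then jump $|\config(v)|$ of them at $v$. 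Since at least two active particles remain at $v$ before each of these topplings, every sleep instruction is inert and every toppling is legal, giving $m_v\le\trueodom(v)$ and handling all topplings at $v$ simultaneously. Without some such argument, your induction does not close.
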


\begin{proof}
Jumping $\config$ is done by executing a sequence of acceptable topplings. By the preemptive abelian property, it suffices to show none of these topplings increases the running odometer past the odometer stabilizing $\state$. At $v \in \supp \config$, jumping $\config$ increases the running odometer by $|\config(v)|$ jump instructions. On the other hand, $(\config + \configg, \odom)$ can be stabilized by first stabilizing $(\configg, \odom)$ on $\{v\}^c$.  By assumption, $k\ge 1$ particles do visit $v$, which now contains $|\config(v)| + k$ active particles. Jumping $|\config(v)|$ particles at $v$ at this point is legal, so these $|\config(v)|$ jump instructions are indeed all executed in the stabilization of $(\config + \configg, \odom)$.
\end{proof}

Now we prove the \emph{street-sweeper lemma} described in the introduction: if $\sweepersstart$ is a configuration of particles likely to visit all sites, then the stabilized configuration
of $\sweepersstart+\carsstart$ is nearly the same in law as the stabilized configuration
of $\sweepers$, since when stabilizing $\carsstart + \sweepersstart$ it is likely
we can topple the car particles off the graph preemptively.

\begin{lemma}[Street-sweeper lemma] \thlabel{lem:street-sweeper}
  Let $p:= \P(\sweepersstart \text{ does not visit all of $V$} )$. Let $N$ be the number of jumps required for configuration $\cars$ to reach the sink, i.e., $N\ge 0$ is the minimal value
  so that $\Jump[N](\cars,0)$ has an empty configuration. Then for any integer $m\geq 1$,
  \begin{align*}
  \tvnorm{\stablelaw{\cars + \sweepers} - \stablelaw{\sweepers}}&\leq mp + \P(N \ge m).
  \end{align*}
\end{lemma}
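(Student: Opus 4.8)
The plan is to couple the stabilization of $\cars + \sweepers$ with that of $\sweepers$ by running the car particles through the graph in a controlled way, using the preemptive jump lemma (\thref{lem:preemptive.jump}) whenever the street sweepers cooperate, and absorbing the failure probability into the $mp$ error term via a union bound. First I would set up the coupling on a common instruction array: generate one family of i.i.d.\ instruction stacks and use it to drive both systems. The idea is to stabilize $\cars + \sweepers$ in stages, alternately jumping all the car particles once (which moves each of them one step, possibly into the sink) and then checking whether the street-sweeper particles, on their own, would have visited the support of the remaining cars; if so, \thref{lem:preemptive.jump} says that jumping those cars has not changed the stabilized state, so we may continue. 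After at most $N$ rounds of jumping, the cars have all reached the sink, and we are left stabilizing $\sweepers$ alone — so on the event that every round succeeds and $N < m$, the two stabilized states coincide.

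The key steps, in order: (1) Define, for $j = 1, 2, \ldots$, the configuration $\cars^{(j)}$ of car particles remaining after $j$ rounds of jumping, so $\cars^{(0)} = \cars$ and $\cars^{(j)}$ is empty for $j \geq N$; write $\state_j := \Jump[{\cars^{(0)}},\,\ldots]$ — more precisely, $\state_j$ is the result of jumping the cars $j$ times starting from $(\cars + \sweepers, 0)$, which by the displayed identity for $\Jump[\config,n]$ in Section~\ref{sec:odometers} equals $\Jump^j(\sweepers, 0) + (\cars^{(j)}, 0)$ once we track that the jumped car mass lands according to independent walks. (2) Let $A_j$ be the event that $(\Jump^j(\sweepers,0))$ — equivalently the sweeper-only system after $j$ rounds of universal jumping — visits $\supp \cars^{(j)}$; note $\cars^{(j)} \subseteq$ (sites reachable from $\supp\cars$), and crucially $\Jump^j(\sweepers,0)$ visits all of $V$ precisely when $\sweepers$ does, since jumping does not decrease which sites get toppled during stabilization. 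So $\P(A_j^c) \leq \P(\sweepers \text{ does not visit all of } V) = p$ for each $j$. (3) On $\bigcap_{j < m} A_j \cap \{N < m\}$, apply \thref{lem:preemptive.jump} inductively $N$ times to conclude that $\Stab(\cars + \sweepers, 0) = \Stab(\cars^{(N)} + \sweepers, 0) = \Stab(\sweepers, 0)$, giving a successful coupling. (4) Bound the total-variation distance by the probability the coupling fails: $\P\bigl(\bigcup_{j<m} A_j^c \cup \{N \geq m\}\bigr) \leq \sum_{j<m}\P(A_j^c) + \P(N \geq m) \leq mp + \P(N \geq m)$.

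The main obstacle I expect is step (2): making precise the claim that "the street sweepers visit $\supp \cars^{(j)}$" can be recycled against the single event "$\sweepers$ visits all of $V$" while the sweepers are frozen in place. The point highlighted in the introduction is that after jumping the cars $j$ times we are in state $\Jump^j(\sweepers,0) + (\cars^{(j)},0)$, and to invoke \thref{lem:preemptive.jump} with this configuration I need $(\Jump^j(\sweepers,0))$ to visit $\supp\cars^{(j)}$; but $\Jump^j(\sweepers,0)$ is itself a random configuration depending on $j$ rounds of jumping, so I must argue that "it visits all of $V$" is the \emph{same} event (up to null sets, in the sitewise coupling) as "$\sweepers = \sweepers^{(0)}$ visits all of $V$." This follows because jumping a particle is a sequence of acceptable topplings that only adds to the running odometer in ways that cannot prevent a later legal toppling elsewhere — formally, by the preemptive abelian property, $\Stab(\Jump^j(\sweepers,0))$ has the same stabilizing odometer as $\Stab(\sweepers,0)$ provided the jumps were preemptive, and hence the same set of visited sites; and visiting all of $V$ makes \emph{every} toppling preemptive, closing the loop. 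Carefully verifying this self-referential structure — that the high-probability event "$\sweepers$ visits all of $V$" simultaneously justifies every one of the $N \leq m$ applications of the preemptive jump lemma without being "used up" — is the delicate part, and is exactly the "recycling via a Markov property" point the introduction advertises.
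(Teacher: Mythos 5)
Your proposal has the right shape (stage-wise jumping of cars, union bound over a preemptiveness event per stage), but it contains a genuine gap in exactly the place you flag as ``the delicate part,'' and a mechanical error that feeds into it.

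First, the mechanical error: you apply the identity $\Jump[\config,n](\config+\configg,\odom)=\Jump^n(\config,\odom)+(\configg,0)$ with the roles of cars and sweepers swapped. Jumping the cars $j$ times starting from $(\cars+\sweepers',0)$ gives $\Jump^j(\cars,0)+(\sweepers',0)$, i.e.\ a state $(\sweepers'+\cars^{(j)},\odom_j)$ in which the sweepers are \emph{unmoved} and the running odometer $\odom_j$ has advanced only by the car particles' jump instructions. Your formula $\Jump^j(\sweepers,0)+(\cars^{(j)},0)$ has the sweepers jumping, which contradicts your own stated goal that the sweepers stay ``frozen in place,'' and it drops the cars' contribution to the running odometer entirely.

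Second, and more seriously, the recycling argument built on that formula is wrong. You claim that ``$\Jump^j(\sweepers,0)$ visits all of $V$ precisely when $\sweepers$ does,'' justified by saying preemptive jumps preserve the stabilizing odometer ``and hence the same set of visited sites.'' This does not follow. Preemptive topplings preserve the stabilizing odometer $\trueodom$, but they \emph{raise} the running odometer, and whether $v$ is visited is exactly whether the running odometer at $v$ is strictly below $\trueodom(v)$. So jumping can only shrink the set of visited sites; the implication you need (``$\sweepers$ visits $V$ $\Rightarrow$ $\Jump^j(\sweepers,0)$ visits $V$'') is false, and $\P(A_j^c)\leq p$ does not follow by any deterministic equivalence of events. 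What actually makes the recycling work is a probabilistic input you never invoke: the strong Markov property for quenched instructions \cite[Proposition~4]{levine2021source}. Because the sweepers are at running odometer $0$ and the cars' jump odometer $\odom_k$ is a stopping ``time'' for the instruction stacks, the conditional law of the unread instructions is again i.i.d., so the event $E_k=\{(\sweepers,\odom_k)\text{ visits }V\}$ has probability $1-p$ \emph{afresh} for each $k$. Relatedly, your step~(3) asserts the exact equality $\Stab(\cars+\sweepers,0)=\Stab(\sweepers,0)$ on the good event, but the correct conclusion is $\Stab(\cars+\sweepers,0)=\Stab(\sweepers,\odom_{m-1})$, and the latter equals $\Stab(\sweepers,0)$ only in law (via the same Markov property), not as a random variable. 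The total-variation bound then comes from this coupling, not from a pointwise identity with $\Stab(\sweepers,0)$.
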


\begin{proof}
The idea is to set up a union bound over the particles in $\sweepers$ failing to preemptively topple those in $\cars$ over $m-1$ jumps. One's first thought might be that jumping the cars off the interval with the preemptive abelian property requires the street sweepers to visit each site at least $m-1$ times. A nice feature of the proof is that we use \thref{lem:preemptive.jump} to iteratively jump the car particles while the street sweepers remain in place. Since the street sweepers have not moved, we can use the Markov property of unrevealed instructions to recycle the probability of visiting all of the sites.

    Let $\sweepers'$ be the configuration of $\A( \sweepers,0)$, which consists solely of active particles. Let $$(\config_0, \odom_0) := (\cars + \sweepers', \zeroconfig) = \A[\supp \sweepers](\cars + \sweepers,0).$$ Activating $\supp \sweepers$ is preemptive for $(\cars + \sweepers,0)$ on the event $E_0 := \{\text{$(\sweepers,0)$ visits $V$}\}$, in which case $(\cars + \sweepers,0)$ has the same stabilized state as $(\config_0, \odom_0)$ by \thref{cor:AU}.

For $1 \le k \le m-1$, let $$(\config_k, \odom_k) = \Jump[\cars, k](\config_0,0) \text{ and } E_k := \{\text{$(\sweepers, \odom_k)$ visits $V$}\}.$$ Notice this agrees with the previously defined $E_0$. On $E_k$, $(\sweepers', \odom_k)$ also visits $V$, so $(\config_k,\odom_k)\text{ and }(\config_{k+1},\odom_{k+1})\text{ have the same stabilization}$ by \thref{lem:preemptive.jump}.  Therefore on $\bigcap_{k=0}^{m-2}E_k$, 
$(\cars + \sweepers,0)$ has the same stabilized state as $ (\config_{m-1}, \odom_{m-1})$. And on the event $\{N \le m-1\}$ we have $\config_{m-1} = \sweepers'$. Finally, on $E_{m-1}$, again by \thref{cor:AU} $(\sweepers', \odom_{m-1})$ and   $(\sweepers, \odom_{m-1})$ have the same stabilized state. Combining all of this, $(\cars + \sweepers,0)$ has the same stabilization as $(\sweepers, \odom_{m-1})$ on $\bigcap_{k=0}^{m-1}E_k \cap \{N \le m-1\}$.  

By the strong Markov property for quenched instructions \cite[Proposition~4]{levine2021source}, 
the stabilized configuration of $(\sweepers,\odom_{m-1})$ has law $\stablelaw{\sweepers}$, and each $E_k$ fails with the same probability $p$. Using this observation and a union bound 
over the complement $\bigcup_{k=0}^{m-1} E_k^c \cup \{N \geq m\}$ yields the lemma.
\end{proof}

\thref{thm:one} is an immediate corollary.
\begin{proof}[Proof of \thref{thm:one}]
    We take $\sweepers=\sigma$ and $\cars=\onepersite$.
    Then $\stationarysymb = \stablelaw{\onepersite + \config}$ by \cite[Theorem~1]{levine2021exact},
    and \thref{lem:street-sweeper} yields
    \begin{align*}
      \tvnorm{\stablelaw{\onepersite + \config} - \stablelaw{\config}}&\leq m\P(\text{$\sigma$ does not visit all sites}) + \P(H \ge m),
    \end{align*}
    noting that $H$ has the same distribution as $N$ with $\cars = \onepersite$ in the statement of the lemma.
\end{proof}

\section{Mixing in dimension one}\label{sec:dim1}

\thref{thm:one} implies that driven-dissipative ARW is mixed once we have added enough
particles to visit all sites. In this section, we complete the proof
of \thref{thm:mix} by showing in dimension one that adding a density
$\critFE+\epsilon$ of particles is enough. 
The proof relies on the following sufficient condition for a configuration
on $\ii{1,n}$ to emit particles from a given endpoint when stabilized.

\begin{prop}\thlabel{prop:exit.criterion}
  Let $\config$ be a configuration of active particles on $\ii{1,n}$
  containing at most $\beta n$ particles for some $\beta>0$.
  If
  \begin{align}
    \sum_{j=1}^n j\config(j)&\geq\frac{(\critFE+\epsilon)n^2}{2}\label{eq:right.exit}
  \end{align}
  for some $\epsilon>0$, then it holds with probability at least $1-Ce^{-cn}$
  that a particle exits $\ii{1,n}$ from the right endpoint when $\sigma$ is stabilized,
  where $c,C>0$ are constants depending only on $\lambda$, $\epsilon$, and $\beta$.
\end{prop}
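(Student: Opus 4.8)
The plan is to pass through the layer percolation process of \cite{hoffman2024density}, which is precisely the device that converts a question about ARW activity at a boundary vertex of $\ii{1,n}$ into a question about the growth of (near-)critical branching processes. Recall that stabilizing a configuration of active particles on $\ii{1,n}$ can be encoded by a layer percolation process scanned from left to right: at layer $j$ one feeds in a number of fresh ``particles'' determined by $\config(j)$, the process then branches and settles according to a law depending only on $\lambda$, and a particle is emitted from the right endpoint during the stabilization of $\config$ whenever the process is nonempty (``alive'') at layer $n$. The critical density $\critFE$ is characterized in \cite{hoffman2024density} by the property that constant density $\critFE$ per layer makes this process critical, with its size behaving like a critical Galton--Watson process (mean-one offspring, finite variance) with immigration, while above $\critFE$ the analogous process is supercritical. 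So the first step is a reduction: show that $\{\text{$\config$ emits a particle from the right endpoint}\}$ contains the event that the layer percolation process driven by $\config$ is alive at layer $n$, and then bound the probability of that event from below.

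For the bound, note first that \eqref{eq:right.exit} already forces $|\config|\ge(\critFE+\epsilon)n/2$, since $\sum_j j\config(j)\le n|\config|$; so $\config$ carries macroscopically many particles. Comparing \eqref{eq:right.exit} to configurations at density $\critFE$ — which have $\sum_j j\config(j)=\critFE n^2/2 + o(n^2)$ — we see $\config$ carries a surplus of at least $\epsilon n^2/2 - o(n^2)$ in position-weighted mass over critical. The weight $j$ is exactly the one the layer percolation bookkeeping produces: surplus deposited at layer $j$ influences the evolution across the $\Theta(n-j)$ layers to its right, and the precise accounting should show that this surplus makes the process driven by $\config$ stochastically dominate a branching process that, by some layer $\ell_0\le n$, has grown to size of order $n$ and is supercritical by a margin bounded below in terms of $\epsilon$ and $\beta$ — this is where $|\config|\le\beta n$ enters, bounding the per-layer input and hence the increments of the size process. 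The key estimate is then that such a process is alive at layer $n$ with probability at least $1-Ce^{-cn}$: once the size has reached a level of order $n$, returning to $0$ requires an atypically sustained downward deviation, whose probability is at most $q^{\Theta(n)}=Ce^{-cn}$ for the relevant extinction parameter $q<1$ (or, equivalently, a union bound over $O(n)$ candidate ``death windows''), with $c,C$ depending only on $\lambda,\epsilon,\beta$.

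The hard part is the second step: extracting from \eqref{eq:right.exit} the right supercritical comparison, and upgrading mere survival — which at exactly critical density would fail only with polynomially small probability, not exponentially small — to the bound $1-Ce^{-cn}$. This forces one to use the weighting by $j$ together with $|\config|\le\beta n$ to locate the surplus ``far enough from the left endpoint'' that its effect on the process near layer $n$ is genuinely supercritical rather than marginal, and then to run a supermartingale / large-deviation argument on the layer percolation increments rather than a bare branching-process survival estimate. A reader fluent in the layer percolation formalism of \cite{hoffman2024density} will recognize each ingredient as routine there; the real content of \thref{prop:exit.criterion} is the translation of ARW activity at the right endpoint into the overwhelmingly likely growth of a critical branching process fed a surplus of order $n^2$ in position-weighted mass.
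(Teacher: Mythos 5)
You correctly identify layer percolation from \cite{hoffman2024density} as the engine, but the mechanism you propose does not match how that machinery actually applies here, and I don't think your ``survival of a supercritical branching process'' picture can be made to work.

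The central difficulty with your framing is that layer percolation does not have a survival question: starting from $(0,0)_0$ there is always an infection path to step $n$. The configuration $\config$ does not ``drive'' a branching process whose size lives or dies. Instead, $\config$ enters the argument through the \emph{minimal odometer} $\m$ of the class $\eosos{n}(\Instr,\config,0,0)$ of extended odometers stable on $\ii{1,n-1}$ with no \Left\ instruction executed at site $1$, and the key identity from the odometer--infection-path correspondence is $\rt{u}{n} = r_n + \rt{\m}{n}$. The paper's proof is then a two-sided \emph{concentration} argument, not a survival argument: (i) by \cite[Proposition~5.8]{hoffman2024density}, $\rt{\m}{n}$ concentrates near $-\sum_j(n-j+1)\config(j)\leq -(\critFE+\epsilon)n^2/2$, with the $\abs{\config}\leq\beta n$ hypothesis controlling the error; (ii) by a row bound transferred to a column bound via a Galton--Watson process with $\Geo(1/2)$ offspring and linearly growing immigration (Lemma 4.5), the maximal column $R_n$ of layer percolation is below $(\critFE+\epsilon/2)n^2/2$ with overwhelming probability; (iii) combining, every extended odometer in $\eosos{n}$ has $\rt{u}{n}<0$, hence $u(n)<0$, so $\eosos{n}$ contains no genuine odometer and a particle must exit. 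Your derived bound $\abs{\config}\geq(\critFE+\epsilon)n/2$ is correct but too weak to deliver any supercriticality (the density could still be well below $\critFE$), and the ``once the size reaches order $n$, extinction costs $e^{-cn}$'' estimate has no corresponding quantity to attach to: the relevant process is the column, which grows on the order $n^2$ and never goes extinct. You also never invoke the minimal odometer, which is exactly where the position-weighted sum \eqref{eq:right.exit} is used, so the surplus you compute in position-weighted mass has nowhere to land in your argument. Finally, your reduction ``particle is emitted whenever the process is nonempty at layer $n$'' is stated in the wrong direction: the event you need to rule out is the existence of a valid (nonnegative) odometer with no exit, and the paper does this by showing every candidate extended odometer goes negative at site $n$, not by showing a process survives.
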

\begin{remark}\thlabel{rmk:left.exit}
  By symmetry, if
  \begin{align}
    \sum_{j=1}^n (n-j+1)\config(j)&\geq\frac{(\critFE+\epsilon)n^2}{2},\label{eq:left.exit}
  \end{align}
  then
  it holds with probability  $1-Ce^{-cn}$ that
  a particle exits the interval from its left endpoint when $\sigma$ is stabilized.
\end{remark}
It is shown in \cite{forien2024macroscopic} that if $\sigma$ contains at least $(\critFE+\epsilon)n$
particles, a macroscopic quantity of them will exit $\ii{1,n}$ when $\sigma$ is stabilized
with positive probability. This result is strengthened in \cite{hoffman2024density}
to the statement that for any $\epsilon'<\epsilon$, at least $\epsilon'n$ particles 
exit $\ii{1,n}$ with probability exponentially close to $1$. 
The key aspect of \thref{prop:exit.criterion} compared to these results
is that it establishes that particles exit the interval via a specific endpoint.

The proposition includes the restriction that $\sigma$ contain at most $\beta n$ particles
for some fixed constant $\beta$, ruling out $\sigma$ consisting
of $(\critFE+\epsilon)n^2/2$ particles at site~$1$, for example.
We suspect that this restriction could be removed, but for this paper we are satisfied
with a less than optimal statement.

In the next section, we apply \thref{prop:exit.criterion} to prove \thref{thm:mix}.
Then in Section~\ref{sec:exit.proof} we prove \thref{prop:exit.criterion} using the tools
from \cite{hoffman2024density}. From now on, we work on the graph with vertices
$V=\ii{1,n}=\{1,\ldots,n\}$ with $1$ and $n$ connected to the sink,
and we assume that the base chain is simple symmetric
random walk.
We say that an event holds \emph{with overwhelming probability} (w.o.p.)\ if
its failure probability can be bounded by $Ce^{-cn}$ for constants $c,C>0$ that may depend
on some values to be specified but do not depend on $n$.

\subsection{Proof of Theorem~\ref{thm:mix} assuming Proposition~\ref{prop:exit.criterion}}

First, we use \thref{prop:exit.criterion} to prove that $(\critFE+\epsilon)n$
particles placed according to central or uniform driving are likely to visit every site in $\ii{1,n}$.

\begin{prop}\thlabel{prop:centralvisit}
  Let $\config$ consist of $\rho n$ active particles at site $\ceil{n/2}$ for $\rho>\critFE$.
  Then $\config$ visits all sites in $\ii{1,n}$ with probability
  at least $1-Ce^{-cn}$ for constants $c,C>0$ depending only on $\rho$ and $\lambda$.
\end{prop}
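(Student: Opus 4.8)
The plan is to reduce \thref{prop:centralvisit} to two applications of \thref{prop:exit.criterion} (using also \thref{rmk:left.exit}) together with a monotonicity/coupling argument that a particle exiting an endpoint forces every intermediate site to be visited. First I would observe that, since $\config$ is a configuration of active particles sitting at the single site $m:=\ceil{n/2}$, a particle can only leave $\ii{1,n}$ from the right endpoint by passing through every site in $\ii{m,n}$; more precisely, by the sitewise construction, if site $n$ is toppled with a jump instruction sending a particle to the sink, then by an induction on topplings every site $j\in\ii{m+1,n}$ must have been toppled at some earlier stage, hence is visited. Thus ``a particle exits from the right'' implies ``$\ii{m,n}$ is visited,'' and symmetrically ``a particle exits from the left'' implies ``$\ii{1,m}$ is visited.'' Taking the intersection of these two events gives that all of $\ii{1,n}$ is visited.

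The main work is then to verify the hypotheses of \thref{prop:exit.criterion} and \thref{rmk:left.exit} for this $\config$. The particle-count hypothesis is immediate: $\config$ has $\rho n$ particles, so we may take $\beta=\rho$. For the moment hypothesis \eqref{eq:right.exit}, all mass is at $j=m=\ceil{n/2}$, so $\sum_{j=1}^n j\config(j) = m\cdot \rho n = \ceil{n/2}\,\rho n$, which is $(1+o(1))\rho n^2/2$. Since $\rho>\critFE$, for $n$ large enough this exceeds $(\critFE+\epsilon)n^2/2$ for a suitable $\epsilon=\epsilon(\rho)>0$ (e.g.\ $\epsilon = (\rho-\critFE)/2$), and for the finitely many small $n$ we can absorb the failure probability into the constant $C$. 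The left-endpoint hypothesis \eqref{eq:left.exit} is handled identically: $\sum_{j=1}^n (n-j+1)\config(j) = (n-m+1)\rho n = (1+o(1))\rho n^2/2$, which likewise exceeds $(\critFE+\epsilon)n^2/2$ for large $n$. One subtlety: \thref{prop:exit.criterion} is stated for configurations on $\ii{1,n}$ stabilized as in the proposition, and here we want to stabilize $\config$ itself (i.e.\ from running odometer $0$); this is exactly the setting of the proposition, so no adjustment is needed beyond noting $\config(j)\in\NN$.

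Putting it together: by \thref{prop:exit.criterion} with $\epsilon=(\rho-\critFE)/2$ and $\beta=\rho$, with probability at least $1-Ce^{-cn}$ a particle exits $\ii{1,n}$ from the right when $\config$ is stabilized, hence (by the toppling-cascade observation above) $\ii{\ceil{n/2},n}$ is visited; by \thref{rmk:left.exit}, with probability at least $1-Ce^{-cn}$ a particle exits from the left, hence $\ii{1,\ceil{n/2}}$ is visited. A union bound over the two failure events shows that with probability at least $1-2Ce^{-cn}$ all of $\ii{1,n}$ is visited, which gives the claim after adjusting constants; the constants depend only on $\rho$ and $\lambda$ (through $\critFE=\critFE(\lambda,1)$ and $\epsilon$), as required.

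The step I expect to be the main obstacle is the deterministic implication ``a particle exits the right endpoint $\implies$ every site of $\ii{\ceil{n/2},n}$ is visited.'' The cleanest route is to use the characterization of ``visited'' already recorded in Section~\ref{sec:odometers}: $v$ is visited by $\state$ iff $\trueodom(v)>0$, i.e.\ $v$ is toppled during stabilization. If the sink is fed a particle from site $n$, then $\trueodom(n)\ge 1$; and since all initial mass is at $m$, the only way any particle reaches site $j$ is for its left neighbor $j-1$ (for $j>m$) to have executed a rightward jump, forcing $\trueodom(j-1)\ge 1$ — a downward induction from $j=n$ to $j=m$. I should make sure this induction is robust to the presence of sleeping particles created during stabilization (it is, since a site can only be toppled if it holds an active particle, which in turn requires an active particle to have arrived there), and I may want to phrase it in terms of the monotonicity of odometers under adding particles, or simply invoke the abelian property to stabilize site-by-site from the right. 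This is routine but worth stating carefully, since it is the only non-probabilistic ingredient.
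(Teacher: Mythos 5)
Your proposal is correct and takes essentially the same route as the paper's one-sentence proof: apply \thref{prop:exit.criterion} (and \thref{rmk:left.exit}) to get exit from both endpoints with overwhelming probability, then observe that since all mass starts at $\ceil{n/2}$, a particle exiting from the right (resp.\ left) forces every site in $\ii{\ceil{n/2},n}$ (resp.\ $\ii{1,\ceil{n/2}}$) to have been visited, and take a union bound. You spell out the hypothesis-verification and the deterministic ``exit implies visited'' step in more detail than the paper bothers to; one small simplification you could note is that the moment bounds hold for \emph{all} $n\ge 1$ with $\epsilon=(\rho-\critFE)/2$ (since $\ceil{n/2}\ge n/2$ and $n-\ceil{n/2}+1\ge n/2$), so no large-$n$ caveat or absorption of small cases is needed. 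The cleanest phrasing of the deterministic step, which you gesture at but state slightly loosely, is that the first particle to cross from $\{<j\}$ to $\{\ge j\}$ for $j>\ceil{n/2}$ must do so via a right jump from $j-1$ (since all mass starts to the left of $j$ and the sink is absorbing), so the visited set is an interval containing $\ceil{n/2}$, and exit at both ends extends it to all of $\ii{1,n}$.
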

\begin{proof}
  \thref{prop:exit.criterion} immediately shows that with probability $1-Ce^{-cn}$,
  particles exit $\ii{1,n}$ from both endpoints when stabilizing $\config$, which implies that $\config$
  visits all sites.
\end{proof}

\begin{prop}\thlabel{prop:uniformvisit}
  Let $\config$ consist of $\rho n$ active particles for $\rho>\critFE$, each placed independently and
  uniformly at random in $\ii{1,n}$. Then $\config$ visits all sites with probability
  at least $1-Ce^{-cn}$ for constants $c,C>0$ depending only on $\rho$ and $\lambda$.
\end{prop}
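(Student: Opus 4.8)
The plan is to reduce \thref{prop:uniformvisit} to \thref{prop:exit.criterion} via \thref{rmk:left.exit}, just as \thref{prop:centralvisit} does, the only extra work being to verify that $\rho n$ uniformly placed particles satisfy the moment conditions \eqref{eq:right.exit} and \eqref{eq:left.exit} with overwhelming probability. Concretely, I would argue that if a particle exits from the left endpoint and a particle exits from the right endpoint during stabilization of $\config$, then every site in $\ii{1,n}$ is visited (a particle travelling from wherever it started out to the left boundary must topple every site between its origin and $1$, and similarly for the right), so it suffices to show that both \eqref{eq:right.exit} and \eqref{eq:left.exit} hold w.o.p. Since $\config$ has exactly $\rho n$ particles, the boundedness hypothesis of \thref{prop:exit.criterion} holds with $\beta = \rho$ deterministically.

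For the moment bounds, write $\config = \sum_{i=1}^{\rho n} \delta_{X_i}$ with $X_i$ i.i.d.\ uniform on $\ii{1,n}$. Then $\sum_{j=1}^n j\,\config(j) = \sum_{i=1}^{\rho n} X_i$ is a sum of $\rho n$ i.i.d.\ bounded random variables with mean $\E X_i = (n+1)/2$, so $\E \sum_i X_i = \rho n (n+1)/2 \geq \rho n^2/2$. Because $\rho > \critFE$, we can pick $\epsilon > 0$ with $\rho > \critFE + 2\epsilon$, so the target $(\critFE+\epsilon)n^2/2$ is below the mean by a margin of order $n^2$, i.e.\ of order $n$ per summand. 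A Hoeffding (or Azuma) bound on $\sum_{i=1}^{\rho n}(X_i - \E X_i)$, whose increments are bounded by $n$, gives a deviation probability of order $\exp(-c' (n^2)^2 / (\rho n \cdot n^2)) = \exp(-c' n)$; hence \eqref{eq:right.exit} holds w.o.p. The computation for \eqref{eq:left.exit} is identical after replacing $X_i$ by $n - X_i + 1$, which is again uniform on $\ii{1,n}$. A union bound over the two failure events and over the failure event of \thref{prop:exit.criterion} and \thref{rmk:left.exit} (each of probability $Ce^{-cn}$) completes the argument, with the final constants depending only on $\rho$, $\epsilon$ (hence on $\rho$), and $\lambda$.

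I do not expect any serious obstacle here: the content is entirely in \thref{prop:exit.criterion}, and this proposition is a routine concentration-of-measure wrapper around it. The one point requiring a little care is the deterministic implication ``particle exits left $+$ particle exits right $\implies$ all sites visited'': one should phrase it in terms of odometers or intermediate configurations in the sitewise construction, noting that a particle reaching the sink at $0$ from some site $j$ forces a legal toppling at every site in $\ii{1,j}$, and symmetrically at $n+1$, so that $\ii{1,j} \cup \ii{j',n} = \ii{1,n}$ once $j \geq 1$ and $j' \leq n$. This is the same reasoning already invoked (without elaboration) in the proof of \thref{prop:centralvisit}, so it can be handled equally briefly.
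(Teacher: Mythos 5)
There is a genuine gap in the deterministic step. You assert that if, in the stabilization of $\config$, a particle exits from the left endpoint and a particle exits from the right endpoint, then every site of $\ii{1,n}$ is visited. This is true for \emph{central} driving, where $\supp\config$ is the single site $\ceil{n/2}$, because the set of toppled sites is always a union of intervals each of which contains an initially-active site, so the (unique) such interval stretches from $1$ to $n$. But for uniform driving, $\supp\config$ can be badly disconnected: if particles exit left the toppled set contains some interval $\ii{1,a}$, and if particles exit right it contains some interval $\ii{b,n}$, but there is no deterministic reason these two intervals must meet. Your justification ``$\ii{1,j}\cup\ii{j',n}=\ii{1,n}$ once $j\geq 1$ and $j'\leq n$'' is simply not true unless $j\geq j'-1$, and nothing forces that. (For instance, with a very large sleep rate and two isolated particles near the two ends, both can exit while leaving the middle untouched.) So ``both ends exit w.o.p.'' does not imply ``all sites visited w.o.p.'' without further work.

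The paper's proof avoids this by not reducing to a single left exit and a single right exit. Instead, for \emph{each} $m\in\ii{n/2,n}$ it restricts attention to the subinterval $\ii{1,m}$, shows via Hoeffding that $\sum_{j=1}^m j\,\config(j)\geq(\critFE+\epsilon)m^2/2$ w.o.p., and applies \thref{prop:exit.criterion} on $\ii{1,m}$ to conclude that a particle exits $\ii{1,m}$ at its right endpoint during $\Stab[\ii{1,m}]$. Since $\Stab=\Stab\circ\Stab[\ii{1,m}]$ by the abelian property, this forces a toppling at $m$ in the full stabilization, i.e.\ $\config$ visits $m$. By symmetry the same works for $m<n/2$ using \thref{rmk:left.exit}, and a union bound over $m\in\ii{1,n}$ gives the result. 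Your concentration computation is essentially the one the paper performs at $m=n$; you need to run it uniformly over all $m\geq n/2$ (which costs only a harmless factor of $n$ in the union bound). In short: keep the Hoeffding estimate, but replace the ``both-endpoints'' reduction with a site-by-site application of \thref{prop:exit.criterion} to the subintervals $\ii{1,m}$ and $\ii{m,n}$.
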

\begin{proof}
  For $m\in\ii{n/2,n}$, consider the stabilization of $\config$ on the subinterval $\ii{1,m}$. Thinking of $\sum_{j=1}^mj\config(j)$
  as a sum over the particles in $\config$
  in which a particle at site $j$ contributes $j\1\{j\leq m\}$, we have
  \begin{align*}
    X:=\sum_{j=1}^mj\config(j) = \sum_{i=1}^{\rho n} Z_i,
  \end{align*}
  where $Z_1,\ldots,Z_{\rho n}$ are independent and
  \begin{align*}
    Z_i=\begin{cases}
      j &\text{with probability $1/n$ for $j\in\ii{1,m}$,}\\
      0 & \text{with probability $1-m/n$.}
    \end{cases}
  \end{align*}
  Since $X$ is a sum of independent random variables in $[0,m]$,
  \begin{align}\label{eq:S.conc}
    \P\bigl( X-\E X\leq -t \bigr)\leq \exp\biggl(-\frac{2t^2}{\rho n m^2}\biggr)
  \end{align}
  for any $t\geq 0$ by Hoeffding's inequality.
  Since $\E Z_i = m(m+1)/2n$, we have $\E X=\rho m(m+1)/2$.
  Defining $\epsilon=(\rho-\critFE)/2$ and applying \eqref{eq:S.conc} with
  $t = \epsilon m^2/2$, we obtain
  \begin{align*}
    \P\biggl(X\leq \frac{(\critFE+\epsilon)m^2}{2}\biggr) \leq
    \P\biggl(X\leq \frac{\rho m(m+1)}{2} - t\biggr)\leq \exp\biggl(-\frac{2\epsilon^2 m^4/4}{\rho n m^2}\biggr)
    \leq \exp\biggl(-\frac{\epsilon^2 n}{8\rho  }\biggr).
  \end{align*}
  Thus we have proven that condition~\eqref{eq:right.exit} holds
  with overwhelming probability. \thref{prop:exit.criterion} then implies that
  for any $m\geq n/2$, a particle exits $m$ when stabilizing $\config$ on $\ii{1,m}$ w.o.p.,
  with the constants in the bound allowed to depend on $\rho$ and $\lambda$.
  Thus $\config$ visits $m$ w.o.p. By symmetry, the same statement is true for $1\leq m<n/2$
  as well, and hence $\config$ visits all sites in $\ii{1,n}$ w.o.p.
\end{proof}

\begin{proof}[Proof of \thref{thm:mix}]
  Fix an initial configuration $\configstart$ on $V:=\ii{1,n}$.
  Suppose that $\mathbf{u}=(u_t)_{t\geq 1}$ is the uniform or central driving sequence.
  Let $\drive_t=\delta_{u_1}+\cdots+\delta_{u_t}$, the configuration
  consisting of the first $t$ driving particles, all taken as awake.
  By the abelian property, $\sigma_t$ is the stabilized configuration of $\sigma_0+\drive_t$.
  Since adding particles to a configuration can only increase the probability
  of visiting all sites,
  \begin{align*}
    \P(\text{$\sigma_0+\drive_{t}$ visits all sites}) \geq
    \P(\text{$\drive_t$ visits all sites}) \geq 1- Ce^{-cn}
  \end{align*}
  for $t> (\critFE+\epsilon)n$ by Proposition~\ref{prop:centralvisit} or \ref{prop:uniformvisit},
  where $c,C>0$ are constants depending only on $\epsilon$ and $\lambda$.
  Applying \thref{thm:one} with $m=n^3$ and using standard
  random walk bounds to get an exponential bound on $\P(H\geq n^3)$,
  we prove \eqref{eq:mixing.upper}.
  
  For the lower bound on mixing time, let $\sigma_0$ be the empty configuration.
  By \cite[Proposition~8.5]{hoffman2024density},
  the density in a sample from $\pi$ is at least $\critFE-\epsilon$ with probability
  at least $1 - Ce^{-cn}$ for some $c,C$ depending only on $\lambda$ and $\epsilon$. 
  On the other hand, $\sigma_t$ has probability~$0$ of having density at least $\critFE-\epsilon$
  when $t<(\critFE-\epsilon)n$, establishing \eqref{eq:mixing.lower}.
\end{proof}

\subsection{Proof of Proposition~\ref{prop:exit.criterion}}\label{sec:exit.proof}

Let $f\colon\ZZ\to \NN$ be the stabilizing
odometer for an initial configuration $\sigma$ on $\ii{1,n}$, taking $f(k)=0$
for $k\notin\ii{1,n}$
(we will often work on $\ZZ$ rather than $\ii{1,n}\cup\{z\}$, with $0$ and $n+1$
identified with the sink $z$).
Considering the version of \thref{prop:exit.criterion} given
in \thref{rmk:left.exit}, our goal is to show that if $\sigma$ satisfies \eqref{eq:left.exit},
then $f(1)$ is likely to be large.
The odometer $f$ is a \emph{stable odometer} on $\ii{1,n}$
in the sense of \cite[Definition~2.2]{hoffman2024density},
meaning that executing topplings according to $f$ leaves a stable configuration on $\ii{1,n}$.
Using the theory developed in \cite{hoffman2024density}, all stable odometers
can be embedded into a process called \emph{layer percolation}. Via this correspondence, 
we will prove that \emph{all} stable odometers are large at site~$1$.
(This statement seems stronger than $f(1)$ being large, but it is in fact equivalent
by the \emph{least-action principle} \cite[Lemma~2.3]{hoffman2024density}, which states that
the stabilizing odometer on a set of sites is the minimal odometer stable on those sites.)

To sketch the proof, we first need to describe layer percolation and its
correspondence with ARW. For a more complete introduction with many examples,
see \cite[Section~3]{hoffman2024density}.
\emph{Layer percolation with parameter $\lambda>0$}
is a form of $(2+1)$-dimensional directed percolation in which two-dimensional \emph{cells}
in one step infect cells
in the next. Cells take the form $(r,s)_k$, where $r$ is the \emph{column},
$s$ is the \emph{row}, and $k$ is the step. We write $(r,s)_k\to(r',s')_{k+1}$ to denote that
$(r,s)_k$ infects $(r',s')_{k+1}$. Layer percolation can be coupled with ARW with sleep
rate $\lambda$ so that stable odometers are embedded in layer percolation as
\emph{infection paths}, chains of cells each infecting the next.

To state the correspondence between ARW and layer percolation, we first extend the
instruction stacks $\Instr_v(j)$ to allow $j\leq 0$. Since we are working exclusively
in dimension one, we indicate the jump instructions by \Left\ and \Right.
We then consider \emph{extended odometers}, which are permitted to take negative values.
When an odometer takes a negative value $-k$ at site~$v$, it represents execution of instructions
$\Instr_v(-k+1),\ldots,\Instr_v(0)$, with each instruction acting in reverse (e.g., a \Left\ instruction
at site~$v$ snatches a particle from $v-1$ rather than pushing a particle there).
The definition
of stable odometers carries over to extended odometers as well, though we leave
all these details for \cite[Section~4]{hoffman2024density} since they will not matter here.
For an extended odometer $f$, we define $\lt{f}{v}$ and $\rt{f}{v}$ as quantities representing
the number of \Left\ and \Right\ instructions executed by $f$ at $v$. When $f(v)\geq 0$,
these quantities are just the counts of the number of \Left\ and \Right\ instructions,
respectively, in $\Instr_v(1),\ldots,\Instr_v(u(v))$.
We define $\eosos{n}=\eosos{n}(\Instr,\config,u_0,f_0)$ as the set of extended odometers on $\ii{0,n}$
that are stable on $\ii{1,n-1}$ for initial configuration $\sigma$,
take the value $u_0$ at $0$, and yield a net flow $f_0$ from site~$0$ to site~$1$
(i.e., for $f\in\eosos{n}(\Instr,\config,u_0,f_0)$ we have
$\rt{f}{0}-\lt{f}{1}=f_0$).
The set $\eosos{n}$ contains a minimal element we call the \emph{minimal odometer},
though typically it is only an extended odometer.
We define $\ip{n}=\ip{n}(\Instr,\sigma,u_0,f_0)$ as the set of infection paths of length~$n$
starting from $(0,0)_0$ in a realization of layer percolation constructed
from $\Instr$ depending on $\sigma$, $u_0$, and $f_0$ (see \cite[Definition~4.4]{hoffman2024density}
for the construction). The sets $\eosos{n}$
and $\ip{n}$ are nearly in bijection. 
When an odometer is represented as an infection
path $(0,0)_0=(r_0,s_0)_0\to\cdots\to(r_n,s_n)_n$, each $r_k$ reflects the number of \Right\ instructions
executed by the odometer at site~$k$, and each $s_k$ counts the number of particles left sleeping
on $\ii{1,k}$ by the odometer:
\begin{prop}[{\cite[Proposition~4.6]{hoffman2024density}}]\thlabel{bethlehem}
For an extended odometer $u$, define $\Phi(u)$ as the sequence of cells
 $\bigl((r_v,s_v)_v,\,0\leq v\leq n\bigr)$ given by
\begin{align*}
  r_v &= \rt{u}{v} - \rt{\mo}{v},\\
  s_v &= \sum_{i=1}^v\1\{\Instr_i(u(i))=\Sleep \},
\end{align*}
where $\mo$ is the minimal odometer of $\eosos{n}(\Instr,\sigma,u_0,f_0)$.
Then $\Phi$ is a surjection from $\eosos{n}(\Instr,\sigma,u_0,f_0)$
onto $\ip{n}(\Instr,\sigma,u_0,f_0)$. If $u,u'\in\eosos{n}(\Instr,\sigma,u_0,f_0)$
are two extended odometers with $\Phi(u)=\Phi(u')$, then for all $v\in\ii{0,n}$,
either $u(v)=u'(v)$
or $u(v)$ and $u'(v)$ are two indices in $\Instr_v$ in a string of consecutive \Sleep\ instructions.
\end{prop}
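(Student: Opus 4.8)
The plan is to prove three things: that $\Phi$ maps $\eosos{n}(\Instr,\sigma,u_0,f_0)$ into $\ip{n}(\Instr,\sigma,u_0,f_0)$, that it is onto, and that two extended odometers with the same image agree except possibly inside runs of consecutive \Sleep\ instructions. All of it rests on one bookkeeping identity. If $u$ is an extended odometer on $\ii{0,n}$ that is stable on $\ii{1,n-1}$ with $u(0)=u_0$ and $\rt{u}{0}-\lt{u}{1}=f_0$, then the net flow across every edge $\{v,v+1\}$ satisfies $\rt{u}{v}-\lt{u}{v+1}=f_0+\sum_{i=1}^{v}\sigma(i)-s_v$. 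Indeed, at an interior site the particles arriving, $\sigma(v)+\rt{u}{v-1}+\lt{u}{v+1}$, must equal the particles leaving, $\rt{u}{v}+\lt{u}{v}$, plus the final occupation of $v$; that occupation equals $1$ exactly when $\Instr_v(u(v))=\Sleep$, i.e.\ exactly when $s_v-s_{v-1}=1$; telescoping from $v=0$ gives the formula.

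First I would check that $\Phi(u)\in\ip{n}(\Instr,\sigma,u_0,f_0)$. The path starts at $(0,0)_0$: $s_0=0$ by the empty-sum convention, and $r_0=\rt{u}{0}-\rt{\mo}{0}=0$ because $u(0)=\mo(0)=u_0$ forces $\rt{u}{0}=\rt{\mo}{0}$. For a step $(r_v,s_v)_v\to(r_{v+1},s_{v+1})_{v+1}$ with $v\ge 1$, I would combine the identity above (which expresses $\lt{u}{v+1}$ through $r_v$, $s_v$, and the fixed data $\Instr,\sigma,u_0,f_0$) with stability of $u$ at site $v$ and feed the result into the definition of the layer-percolation infection relation; I expect that relation to have been set up in \cite{hoffman2024density} precisely so that the verification is an identity, while the first step $(0,0)_0\to(r_1,s_1)_1$ records the flow normalization $\rt{u}{0}-\lt{u}{1}=f_0$ rather than an interior stability constraint.

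Next I would handle surjectivity and the fibers together. Given a path $\bigl((r_v,s_v)_v,\,0\leq v\leq n\bigr)\in\ip{n}(\Instr,\sigma,u_0,f_0)$, I reconstruct an odometer $u$ recursively: set $u(0)=u_0$, and given $u$ on $\ii{0,v}$ use $r_{v+1}$ to fix $\rt{u}{v+1}=r_{v+1}+\rt{\mo}{v+1}$, the identity above with $s_v$ to fix $\lt{u}{v+1}$, and $s_{v+1}-s_v\in\{0,1\}$ to decide whether $\Instr_{v+1}(u(v+1))=\Sleep$. Scanning $\Instr_{v+1}$ from position $1$, there is a unique index at which the prescribed numbers of \Left\ and \Right\ instructions have just been exhausted and the next instruction is not \Sleep\ (the case $s_{v+1}-s_v=0$); if instead $s_{v+1}-s_v=1$, then $u(v+1)$ may be any index in the maximal run of consecutive \Sleep\ instructions that immediately follows. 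Checking that the reconstructed $u$ is stable on $\ii{1,n-1}$ with net flow $f_0$ is the infection-path condition read in reverse, so $\Phi(u)$ is the given path. For the fibers, if $\Phi(u)=\Phi(u')$ then the $r$-coordinates give $\rt{u}{v}=\rt{u'}{v}$ for all $v$, the identity above then gives $\lt{u}{v}=\lt{u'}{v}$ for all $v$, so at each site $u$ and $u'$ execute the same \Left\ and \Right\ instructions and leave the same sleeping count; hence $u(v)=u'(v)$ unless the last instruction executed there is a \Sleep, in which case $u(v)$ and $u'(v)$ lie in a common run of consecutive \Sleep\ instructions.

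The main obstacle---the one spot not reducible to mass balance---is the claim that stability of an extended odometer at an interior site is \emph{equivalent} to a single infection step of layer percolation, together with its converse used in the reconstruction. Making this rigorous forces one to use the definitions of \cite[Section~4]{hoffman2024density} verbatim: extended instructions at nonpositive stack indices, the meaning of the counts $\rt{u}{v}$ and $\lt{u}{v}$ when $u(v)<0$, and the explicit list of cells $(r',s')_{v+1}$ infected by a given cell $(r,s)_v$---plus careful attention to how \Sleep\ instructions interact with the flow, so that precisely the trailing-\Sleep\ ambiguity, and nothing more, remains in the preimage.
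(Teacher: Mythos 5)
This proposition is not proved in the present paper; it is imported verbatim as \cite[Proposition~4.6]{hoffman2024density}, so there is no in-paper proof against which to compare your attempt. With that caveat, your sketch has the right skeleton. The mass-balance identity $\rt{u}{v}-\lt{u}{v+1}=f_0+\sum_{i=1}^{v}\sigma(i)-s_v$ you derive is exactly the bookkeeping on which the correspondence rests: the $r$-coordinate normalizes the right-jump count against $\rt{\mo}{v}$, the $s$-coordinate tracks sleeping particles accumulated, and flow conservation across each edge ties the two together so that stability at an interior site becomes a local constraint between consecutive cells. Your handling of surjectivity and of the fibers (reconstruct $\rt{u}{v+1}$ from $r_{v+1}$, then $\lt{u}{v+1}$ from the identity, then locate $u(v+1)$ by scanning $\Instr_{v+1}$; observe that the only residual freedom is sliding within a maximal run of consecutive \Sleep\ instructions) is also structurally sound and matches what the statement's conclusion requires.

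The gap you flag at the end is the real one, and it is more than a formality. Everything in your argument that is ``an identity'' is an identity only once the layer-percolation infection relation $(r,s)_v\to(r',s')_{v+1}$ has been defined, and that definition is not reproduced in this paper. Concretely, to close the argument you would need: (a) the extension of $\rt{u}{v}$ and $\lt{u}{v}$ to negative stack indices so that $\mo$ and the normalization $r_v=\rt{u}{v}-\rt{\mo}{v}$ make sense for extended odometers; (b) the fact that which cells $(r',s')_{v+1}$ a cell $(r,s)_v$ infects is read off from $\Instr_{v+1}$ in a way that encodes exactly the stability constraint at site $v+1$ together with the \Sleep/non-\Sleep\ dichotomy driving $s_{v+1}-s_v\in\{0,1\}$; and (c) a check that your reconstruction lands on a genuine (possibly nonpositive) index of $\Instr_{v+1}$ whose instruction type agrees with $s_{v+1}-s_v$, and that it does so for exactly one non-\Sleep\ index or exactly one maximal \Sleep\ run, no more and no less. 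None of this can be verified from the material available here; it requires \cite[Section~4]{hoffman2024density} verbatim, as you yourself acknowledge. Your proposal is therefore best read as a correct high-level plan plus an honest admission that the proof cannot be completed without the source's definitions.
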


Let $S_n$ be the largest row in a realization of layer percolation that contains a cell at step~$n$
infected starting from $(0,0)_0$. As a consequence of subadditivity we have
$S_n/n\to\crist$ in probability, where
$\crist=\crist(\lambda)\in[0,1]$ is a constant \cite[Proposition~5.18]{hoffman2024density}.
Eventually it is shown that $\crist=\critFE=\critDD$ \cite[Theorem~1]{hoffman2024density}.
We will generally refer to $\crist$ rather than $\critFE$ for the rest of the argument
to avoid confusion when we cite results from \cite{hoffman2024density} that make reference
to $\crist$.

To prove that all stable odometers on $\ii{1,n}$ execute at least one \Left\ instructions
at site~$1$, we must show that all the infection paths in certain sets
$\ip{n}(\Instr,\sigma,u_0,f_0)$ do not correspond to odometers;
that is, all the elements of the corresponding $\eosos{n}$ are 
only extended odometers because they take negative values at some site.
We will show specifically that the odometers are all negative at site~$n$.
From the definition of $\Phi$ in \thref{bethlehem}, 
this fact comes down to the rightmost
column infected at step~$n$ of layer percolation being bounded by $\rt{\mo}{n}$, the number
of \Right\ instructions executed at site~$n$ by the minimal odometer.
Thus our first step is to bound the rightmost column infected in layer percolation.

\begin{lemma}\thlabel{lem:column.boundary}
  Let $R_n$ be the rightmost column infected at step~$n$ starting from
  cell $(0,0)_0$ in layer percolation with sleep parameter $\lambda>0$.
  For any $\epsilon>0$,
  \begin{align*}
    \P\biggl(R_n \geq \frac{(\crist+\epsilon)n^2}{2}\biggr) \leq Ce^{-cn},
  \end{align*}
  where $c,C>0$ are constants depending only on $\lambda$ and $\epsilon$.
\end{lemma}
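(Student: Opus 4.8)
The plan is to analyze the evolution of the rightmost infected column directly as a function of the number of steps, and show that its per-step increments are, on average, bounded in a way that forces $R_n \le \frac{(\crist+\epsilon)n^2}{2}$ with overwhelming probability. The intuition is that the column of a cell counts the number of \Right\ instructions executed at the corresponding site, so after $v$ steps the rightmost column should be at most roughly the sum over $k\le v$ of the number of \Right\ instructions available at site $k$ before the odometer becomes negative there. Since an odometer that reaches row $s_k$ at step $k$ has used $s_k$ \Sleep\ instructions among its first $u(k)$ instructions at site $k$, and since $S_n/n \to \crist$, the rightmost column at step $n$ should scale like $\sum_{k=1}^n \crist k \approx \crist n^2/2$. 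To make this rigorous, I would first relate $R_n$ to $S_n$ (or more precisely to the rows visited along the maximizing infection path) using the structure of layer percolation described in \cite[Section~5]{hoffman2024density}: in one step of layer percolation, a cell $(r,s)_k$ can infect a cell $(r',s')_{k+1}$ only with $r' - r$ bounded in terms of the number of non-\Sleep\ instructions before the $(s'-s)$-th \Sleep\ instruction at site $k+1$, which is governed by a geometric-type random variable.

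The key steps, in order, would be: (1) Express $R_n$ as the maximum of $r_n$ over infection paths $(0,0)_0 = (r_0,s_0)_0 \to \cdots \to (r_n,s_n)_n$, and telescope $r_n = \sum_{k=1}^n (r_k - r_{k-1})$. (2) Bound each increment $r_k - r_{k-1}$ by (a constant times) the number of instructions at site $k$ consumed up to that point, which by the layer-percolation construction is controlled by $s_k$ plus a sum of i.i.d. geometric random variables (the runs of non-\Sleep\ instructions between consecutive \Sleep\ instructions). (3) Use $S_n \le (\crist + \epsilon/2)n$ with overwhelming probability (from \cite[Proposition~5.18]{hoffman2024density} together with a large-deviation upper bound for the subadditive quantity $S_n$) to control the $s_k$ contribution by roughly $\sum_k (\crist+\epsilon/2)k$. (4) Use a concentration inequality (Hoeffding or Bernstein, after truncating the geometrics at a level like $n$, with the truncation failing only with exponentially small probability) to control the i.i.d. contribution, noting it is lower-order — it contributes $O(n^2)$ but with a coefficient that can be absorbed, or more carefully it contributes only $O(n \cdot \text{const})$ per column. (5) Combine to get $R_n \le \frac{(\crist+\epsilon)n^2}{2}$ off an event of probability $Ce^{-cn}$.

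The main obstacle I expect is Step~(2)--(3): making the bound on $r_k - r_{k-1}$ precise while simultaneously keeping track of the row $s_k$, since the column increment and the row increment at step $k$ are not independent — they are both read off the same instruction stack at site $k$. One has to be careful that the event "$S_n$ is not too large" genuinely controls the columns along \emph{every} infection path and not just the row-maximizing one, which may require a union bound over paths or, better, a monotonicity/coupling argument showing that pushing mass into higher rows is what maximizes the column. A cleaner route, which I would try first, is to bound $R_n$ along the single path that achieves the maximum column and argue that this path cannot have rows exceeding $S_n$, then use the explicit distributional description of column increments given a row increment from \cite[Section~4--5]{hoffman2024density}. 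A secondary technical point is the large-deviation \emph{upper} bound on $S_n/n$: since $S_n$ is only known to converge in probability, I would need either a direct supermartingale/exponential-moment argument or to invoke a quantitative version already present in \cite{hoffman2024density} (the density-conjecture paper proves exponential concentration of related quantities, so this should be available or easily adapted).
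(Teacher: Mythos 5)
Your two-stage plan---first bound the rows along all infection paths, then use that row bound to control the columns---matches the structure of the paper's proof exactly. The paper defines a ``row-bounded'' event using the quantitative row estimate from \cite[Proposition~7.1]{hoffman2024density} (the large-deviation upper bound on $S_j$ you correctly anticipated would be needed and should be available), then shows that every row-bounded path has $r_n < (\crist+\epsilon)n^2/2$. Your worry about the union bound over paths is also well placed, and the resolution you guess at---a domination/monotonicity argument---is precisely what the paper uses: \cite[Lemma~5.5]{hoffman2024density} shows that the column of \emph{any} row-bounded infection path is dominated by a single process $r_j$ where $(r_j+\smax_j+1)_j$ is a Galton--Watson process with $\Geo(1/2)$ offspring and immigration $\smax_j+1$.

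The one genuine gap is in your Steps~(2) and~(4). You propose to write the column increment $r_k - r_{k-1}$ as ``$s_k$ plus a sum of i.i.d.\ geometric random variables'' and then apply Hoeffding to the resulting sum. This underestimates the dependence structure: the number of geometric variables that must be summed at step~$k$ is itself of order $r_{k-1}$ (each unit of the current column spawns its own $\Geo(1/2)$ run of non-\Sleep\ instructions), so the increments are not an independent sum but a critical branching process with immigration. Telescoping $r_n = \sum_k (r_k - r_{k-1})$ gives a sum of martingale-type increments whose sizes grow with the process itself, and a naive Hoeffding bound does not apply. The paper instead invokes a concentration estimate tailored to Galton--Watson processes with immigration, \cite[Proposition~A.1]{hoffman2024density}, to pinpoint $r_n$ within $\epsilon'n^2$ of its mean $\sum_{j=0}^n(\smax_j+1) \approx \crist n^2/2 + \epsilon'n^2$. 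Your proposal would work if you replaced the Hoeffding step with that GW-with-immigration concentration result; as written, the concentration step would not go through.
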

\begin{proof}
  \newcommand{\rowboundevent}{\mathsf{RowBound}}\newcommand{\colboundevent}{\mathsf{ColBound}}
  The idea of the proof is to take the bound \cite[Proposition~7.1]{hoffman2024density}
  on the topmost row infected and apply \cite[Lemma~5.5]{hoffman2024density} to transfer the
  bound from row to column. We allow the constants in w.o.p.\ bounds to depend on
  $\lambda$ and $\epsilon$.
  Let $\epsilon' = \epsilon/8$.
  Let $\smax_j = \crist j + \epsilon'n$.
  We say that an infection path $(r_0,s_0)_0\to\cdots\to(r_n,s_n)_n$ is \emph{row-bounded}
  if $s_j\leq\smax_j$ for each $0\leq j\leq n$.
  Define $\rowboundevent$ as the event that every infection path from $(0,0)_0$
  is row-bounded.
  And we define $\colboundevent$ as the event that every row-bounded infection path
  $(0,0)_0=(r_0,s_0)_0\to\cdots\to(r_n,s_n)_n$ satisfies $r_n<(\crist+\epsilon)n^2/2$.
  We aim to show that $\rowboundevent$ and $\colboundevent$ occur with overwhelming
  probability, which proves the lemma.
  
  To show that $\rowboundevent$ occurs with overwhelming probability, let
  $S_j$ be the maximum of $s_j$ over all infection paths
  $(0,0)_0=(r_0,s_0)_0\to\cdots\to(r_n,s_n)_n$.
  Since infection paths move up by at most one row in each step,
  for all $j\leq\epsilon' n$ we have $S_j\leq j\leq \smax_j$.
  The bound \cite[Proposition~7.1]{hoffman2024density} gives
  \begin{align*}
    \P\bigl(S_j\geq \smax_j\bigr)\leq \P\bigl(S_j\geq (\crist+\epsilon')j\bigr)
      \leq C e^{-cj}
  \end{align*}
  where $c,C>0$ depend only on $\epsilon'$ and $\lambda$. Taking a union
  bound, it holds with overwhelming probability
  that $S_j\leq\smax_j$ for $\epsilon' n<j\leq n$. Hence $\rowboundevent$
  occurs with overwhelming probability.
  
  To bound the probability of $\colboundevent$, we note that by \cite[Lemma~5.5]{hoffman2024density},
  the column of any row-bounded infection path at step~$j$ is bounded by $r_j$
  where $r_0=0$ and $(r_j+\smax_j+1)_{j=0}^n$ is a Galton--Watson process with child distribution $\Geo(1/2)$
  and immigration $\smax_j+1$ at step~$j$ for $j\geq 1$. By \cite[Proposition~A.1]{hoffman2024density},
  it holds with overwhelming probability that $r_n+\smax_n+1$ is within $\epsilon' n^2/2$
  of its mean, which is
  \begin{align*}
    \sum_{j=0}^n(\smax_j+1) = \frac{\crist n^2}{2} + \epsilon'n^2 + O(n).
  \end{align*}
  Thus $r_n$ is within $\epsilon' n^2$ of $\crist n^2/2+\epsilon' n^2$ w.o.p.
  Therefore $r_n\leq \crist n^2+2\epsilon'n^2<(\crist+\epsilon)n^2/2$ w.o.p., 
  thus proving that $\colboundevent$ holds w.o.p.
\end{proof}

We are ready to prove \thref{prop:exit.criterion} now.
Essentially, the result is a consequence of the previous
lemma together with \thref{bethlehem} and
the concentration estimate \cite[Proposition~5.8]{hoffman2024density},
which pinpoints the value of $\rt{\mo}{k}$ for the minimal odometer $\mo$
up to order $n^2$.

\begin{proof}[Proof of \thref{prop:exit.criterion}]
  We will prove the version of the proposition from \thref{rmk:left.exit},
  which by symmetry
  proves the original version as well. Thus we assume \eqref{eq:left.exit}
  and prove that a particle moves from site~$1$ to the sink at site~$0$ w.o.p., with
  constants in w.o.p.\ bounds
  allowed to depend on $\lambda$, $\epsilon$, and $\beta$.

    Let $u_0=f_0=0$, and let $\Instr$ denote the instructions for ARW with
  sleep rate $\lambda>0$.
  Consider the class $\eosos{n}=\eosos{n}(\Instr,\config,u_0,f_0)$, the set
  of extended odometers $u$ on $\ii{0,n}$ stable on $\ii{1,n-1}$
  satisfying $u(0)=0$ and $\lt{u}{1}=0$. If no particle exits $\ii{1,n}$ at the left
  endpoint, then no \Left\ instruction is executed at site~$1$ and thus the odometer
  stabilizing $\config$ falls into $\eosos{n}$. Thus, it suffices to show
  that $\eosos{n}$ contains no odometers w.o.p., 
  i.e., every extended odometer in $\eosos{n}$
  takes negative values. We will show specifically that every $u\in\eosos{n}$ satisfies $u(n)<0$ w.o.p.
  
    Let $\m$ be the minimal odometer of $\eosos{n}$.
  By \cite[Proposition~5.8]{hoffman2024density}, it holds with overwhelming
  probability that $\rt{\m}{n}$ is within $\epsilon n^2/4$ of
  \begin{align*}
    -\sum_{i=1}^n \sum_{v=1}^i\config(v)=-\sum_{j=1}^n (n-j+1)\config(j)
      \leq-\frac{(\crist+\epsilon)n^2}{2}.
  \end{align*}
  Here we are using our assumption that $\config$ contains at most $\beta n$ particles
  to make the right-hand side of \cite[eq.~(24)]{hoffman2024density}
  exponentially small for $t=\epsilon n^2/2$ by making $\emax\leq \beta n$.
  Hence
  \begin{align}\label{eq:min.bound}
    \rt{\m}{n}\leq -(\crist+\epsilon')n^2/2\text{ w.o.p.,}
  \end{align}
  where $\epsilon'=\epsilon/2$.

  Let $\ip{n}=\ip{n}(\Instr,\config,u_0,f_0)$, the set of length~$n$ infection paths
  starting from $(0,0)_0$ in layer percolation
  generated from $\Instr$. By \thref{bethlehem}, each extended
  odometer $u\in\eosos{n}$ corresponds to an infection path
  \begin{align}
    (r_0,s_0)_0\to\cdots\to(r_n,s_n)_n\label{eq:ip}
  \end{align}
  in $\ip{n}$ with $\rt{u}{n}=r_n+\rt{\m}{n}$.
  By \thref{lem:column.boundary}, it holds with overwhelming probability
  that $r_n< (\crist+\epsilon')n^2/2$ for all infection paths \eqref{eq:ip}
  in $\ip{n}$. Together with \eqref{eq:min.bound}, this proves that $\rt{u}{n}<0$
  and hence that $u(n)<0$ for all $u\in\eosos{n}$ w.o.p. Hence all extended
  odometers in $\eosos{n}$ take a negative value. Thus the stabilizing
  odometer is not found in $\eosos{n}$, and we can conclude that it executes at least one
  \Left\ instruction at site~$1$ w.o.p.
\end{proof}

\section*{Acknowledgments}

Hoffman was partially supported by NSF grant DMS-1954059.
Junge was partially supported by NSF grant DMS-2238272.
We thank Leo Rolla for helpful conversations.
We are grateful to BIRS, which hosted the workshop \emph{Markov Chains with Kinetic Constraints and Applications}.

\bibliographystyle{amsalpha}
\bibliography{main}

\newcommand{\etalchar}[1]{$^{#1}$}
\providecommand{\bysame}{\leavevmode\hbox to3em{\hrulefill}\thinspace}
\providecommand{\MR}{\relax\ifhmode\unskip\space\fi MR }
\providecommand{\MRhref}[2]{%
  \href{http://www.ams.org/mathscinet-getitem?mr=#1}{#2}
}
\providecommand{\href}[2]{#2}
\begin{thebibliography}{WPC{\etalchar{+}}16}

\bibitem[AFG24]{asselah2024critical}
Amine Asselah, Nicolas Forien, and Alexandre Gaudilli\`ere, \emph{The critical
  density for activated random walks is always less than 1}, Ann. Probab.
  \textbf{52} (2024), no.~5, 1607--1649. \MR{4791417}

\bibitem[BGH18]{basu2018nonfixation}
Riddhipratim Basu, Shirshendu Ganguly, and Christopher Hoffman,
  \emph{Non-fixation for conservative stochastic dynamics on the line}, Comm.
  Math. Phys. \textbf{358} (2018), no.~3, 1151--1185. \MR{3778354}

\bibitem[BGHR19]{BasuGangulyHoffmanRichey19}
Riddhipratim Basu, Shirshendu Ganguly, Christopher Hoffman, and Jacob Richey,
  \emph{Activated random walk on a cycle}, Ann. Inst. Henri Poincar\'{e}
  Probab. Stat. \textbf{55} (2019), no.~3, 1258--1277. \MR{4010935}

\bibitem[BHS24]{brown2024activated}
Madeline Brown, Christopher Hoffman, and Hyojeong Son, \emph{Activated random
  walks on {$\mathbb{Z}$} with critical particle density}, arXiv:2411.07609,
  2024.

\bibitem[BS22]{bristiel2022separation}
Alexandre Bristiel and Justin Salez, \emph{Separation cutoff for {A}ctivated
  {R}andom {W}alks}, to appear in \emph{Ann. Appl. Probab.}, available at
  arXiv:2209.03274, 2022.

\bibitem[BTW87]{BakTangWiesenfeld87}
P.~Bak, C.~Tang, and K.~Wiesenfeld, \emph{Self-organized criticality: An
  explanation of the $1/f$ noise}, Phys. Rev. Lett. \textbf{59} (1987), 381.

\bibitem[BTW88]{BakTangWiesenfeld88}
\bysame, \emph{Self-organized criticality}, Phys. Rev. A \textbf{38} (1988),
  364--374.

\bibitem[DVZ98]{dickman1998self}
Ronald Dickman, Alessandro Vespignani, and Stefano Zapperi,
  \emph{Self-organized criticality as an absorbing-state phase transition},
  Phys. Rev. E \textbf{57} (1998), no.~5, 5095.

\bibitem[FdBR05]{fey2005organized}
A.~Fey-den Boer and F.~Redig, \emph{Organized versus self-organized criticality
  in the abelian sandpile model}, Markov Process. Related Fields \textbf{11}
  (2005), no.~3, 425--442. \MR{2175021}

\bibitem[FG22]{forien2022active}
Nicolas Forien and Alexandre Gaudilli\`ere, \emph{Active phase for {A}ctivated
  {R}andom {W}alks on the lattice in all dimensions}, arXiv:2203.02476, to
  appear in Ann. Inst. Henri Poincaré Probab. Stat., 2022.

\bibitem[FLW10a]{fey2010approach}
Anne Fey, Lionel Levine, and David~B. Wilson, \emph{Approach to criticality in
  sandpiles}, Phys. Rev. E (3) \textbf{82} (2010), no.~3, 031121, 14.
  \MR{2787987}

\bibitem[FLW10b]{fey2010driving}
\bysame, \emph{Driving sandpiles to criticality and beyond}, Phys. Rev. Lett.
  \textbf{104} (2010), 145703.

\bibitem[FMR09]{fey2009stabilizability}
Anne Fey, Ronald Meester, and Frank Redig, \emph{Stabilizability and
  percolation in the infinite volume sandpile model}, Ann. Probab. \textbf{37}
  (2009), no.~2, 654--675. \MR{2510019}

\bibitem[For24]{forien2024macroscopic}
Nicolas Forien, \emph{Macroscopic flow out of a segment for {A}ctivated
  {R}andom {W}alks in dimension 1}, arXiv:2405.04510, 2024.

\bibitem[HJJ24a]{hoffman2024density}
Christopher Hoffman, Tobias Johnson, and Matthew Junge, \emph{The density
  conjecture for activated random walk}, arXiv:2406.01731 (2024).

\bibitem[HJJ24b]{hoffman2024proof}
\bysame, \emph{A proof of self-organized criticality in a sandpile}, 2024.

\bibitem[HJL19]{hough2019sandpiles}
Robert~D. Hough, Daniel~C. Jerison, and Lionel Levine, \emph{Sandpiles on the
  square lattice}, Comm. Math. Phys. \textbf{367} (2019), no.~1, 33--87.
  \MR{3933404}

\bibitem[HRR23]{HoffmanRicheyRolla20}
Christopher Hoffman, Jacob Richey, and Leonardo~T. Rolla, \emph{Active phase
  for activated random walk on {$\mathbb{Z}$}}, Comm. Math. Phys. \textbf{399}
  (2023), no.~2, 717--735. \MR{4576759}

\bibitem[HS21]{hough2021cutoff}
Robert Hough and Hyojeong Son, \emph{Cut-off for sandpiles on tiling graphs},
  Ann. Probab. \textbf{49} (2021), no.~2, 671--731. \MR{4255129}

\bibitem[Hu22]{hu2022active}
Yiping Hu, \emph{Active phase for {A}ctivated {R}andom {W}alk on
  $\mathbb{Z}^2$}, arXiv:2203.14406, 2022.

\bibitem[JLS12]{jerison2012logarithmic}
David Jerison, Lionel Levine, and Scott Sheffield, \emph{Logarithmic
  fluctuations for internal {DLA}}, J. Amer. Math. Soc. \textbf{25} (2012),
  no.~1, 271--301. \MR{2833484}

\bibitem[JLS13]{jerison2013internal}
\bysame, \emph{Internal {DLA} in higher dimensions}, Electron. J. Probab.
  \textbf{18} (2013), No. 98, 14. \MR{3141799}

\bibitem[JMT24]{jarai2023critical}
Antal~A. J{\'a}rai, Christian M{\"o}nch, and Lorenzo Taggi, \emph{The critical
  window in activated random walk on the complete graph}, arXiv:2304.10169,
  2024.

\bibitem[Lev15]{levine2015threshold}
Lionel Levine, \emph{Threshold state and a conjecture of {P}oghosyan,
  {P}oghosyan, {P}riezzhev and {R}uelle}, Comm. Math. Phys. \textbf{335}
  (2015), no.~2, 1003--1017. \MR{3316648}

\bibitem[LL24]{levine2021exact}
Lionel Levine and Feng Liang, \emph{Exact sampling and fast mixing of activated
  random walk}, Electron. J. Probab. \textbf{29} (2024), Paper No. 1.
  \MR{4838433}

\bibitem[LP17]{levin2017markov}
David~A. Levin and Yuval Peres, \emph{Markov chains and mixing times}, second
  ed., American Mathematical Society, Providence, RI, 2017, With contributions
  by Elizabeth L. Wilmer, With a chapter on ``Coupling from the past'' by James
  G. Propp and David B. Wilson. \MR{3726904}

\bibitem[LPS16]{levine2016apollonian}
Lionel Levine, Wesley Pegden, and Charles~K. Smart, \emph{Apollonian structure
  in the {A}belian sandpile}, Geom. Funct. Anal. \textbf{26} (2016), no.~1,
  306--336. \MR{3494492}

\bibitem[LPS17]{levine2017apollonian}
\bysame, \emph{The {A}pollonian structure of integer superharmonic matrices},
  Ann. of Math. (2) \textbf{186} (2017), no.~1, 1--67. \MR{3664999}

\bibitem[LS21]{levine2021source}
Lionel Levine and Vittoria Silvestri, \emph{How far do activated random walkers
  spread from a single source?}, J. Stat. Phys. \textbf{185} (2021), no.~3,
  Paper No. 18, 27. \MR{4334780}

\bibitem[LS24]{levine2023universality}
Lionel Levine and Vittoria Silvestri, \emph{{Universality conjectures for
  activated random walk}}, Probability Surveys \textbf{21} (2024), 1 -- 27.

\bibitem[Man91]{manna1991two}
S.~S. Manna, \emph{Two-state model of self-organized criticality}, J. Phys. A:
  Math. Gen \textbf{24} (1991), no.~7, L363.

\bibitem[Rol20]{rolla2020activated}
Leonardo~T. Rolla, \emph{Activated random walks on {$\mathbb{Z}^d$}}, Probab.
  Surv. \textbf{17} (2020), 478--544. \MR{4152668}

\bibitem[RS12]{rolla2012absorbing}
Leonardo~T. Rolla and Vladas Sidoravicius, \emph{Absorbing-state phase
  transition for driven-dissipative stochastic dynamics on {${\mathbb{Z}}$}},
  Invent. Math. \textbf{188} (2012), no.~1, 127--150. \MR{2897694}

\bibitem[RSZ19]{rolla2019universality}
Leonardo~T. Rolla, Vladas Sidoravicius, and Olivier Zindy, \emph{Universality
  and sharpness in activated random walks}, Ann. Henri Poincar\'{e} \textbf{20}
  (2019), no.~6, 1823--1835. \MR{3956161}

\bibitem[ST17]{SidoraviciusTeixeira17}
Vladas Sidoravicius and Augusto Teixeira, \emph{Absorbing-state transition for
  stochastic sandpiles and activated random walks}, Electron. J. Probab.
  \textbf{22} (2017), Paper No. 33, 35. \MR{3646059}

\bibitem[ST18]{StaufferTaggi18}
Alexandre Stauffer and Lorenzo Taggi, \emph{Critical density of activated
  random walks on transitive graphs}, Ann. Probab. \textbf{46} (2018), no.~4,
  2190--2220. \MR{3813989}

\bibitem[Tag23]{taggi2023essential}
Lorenzo Taggi, \emph{Essential enhancements in {A}belian networks: {C}ontinuity
  and uniform strict monotonicity}, Ann. Probab. \textbf{51} (2023), no.~6,
  2243--2264. \MR{4666295}

\bibitem[VDMZ00]{vespignani2000absorbing}
Alessandro Vespignani, Ronald Dickman, Miguel~A. Mu{\~n}oz, and Stefano
  Zapperi, \emph{Absorbing-state phase transitions in fixed-energy sandpiles},
  Phys. Rev. E \textbf{62} (2000), no.~4, 4564.

\bibitem[WPC{\etalchar{+}}16]{watkins2016twentyfive}
Nicholas~W Watkins, Gunnar Pruessner, Sandra~C Chapman, Norma~B Crosby, and
  Henrik~J Jensen, \emph{25 years of self-organized criticality: concepts and
  controversies}, Space Science Reviews \textbf{198} (2016), 3--44.

\end{thebibliography}

\end{document}